\newcommand{\cut}[1]{}
\newtheorem{thm}{Theorem}[section]
\newtheorem{theorem}[thm]{Theorem}
\newtheorem{prop}[thm]{Proposition}
\newtheorem{proposition}[thm]{Proposition}
\newtheorem{lemma}[thm]{Lemma}
\newtheorem{remark}[thm]{Remark}
\newtheorem{definition}[thm]{Definition}
\newtheorem{result}[thm]{Result}
\newcommand{\cL}{\mathcal{L}}
\newcommand{\cB}{\mathcal{B}}
\newcommand{\cC}{\mathcal{C}}
\newcommand{\cV}{\mathcal{V}}
\newcommand{\cK}{\mathcal{K}}
\newcommand{\cH}{\mathcal{H}}
\newcommand{\cS}{\mathcal{S}}
\newcommand{\PG}{\mathrm{PG}}
\newcommand{\tb}{\tilde{\cB}}
\newcommand{\wt}[1]{\widetilde{#1}}
\newcommand{\wh}[1]{\widehat{#1}}
\newcommand{\GF}{\mathrm{GF}}
\newcommand{\UCN}{\bar\chi}
\newcommand{\qbinom}[2]{\left[\!\begin{smallmatrix}#1\\#2\end{smallmatrix}\!\right]_q}
\newcommand{\dqbinom}[2]{\begin{bmatrix}#1\\#2\end{bmatrix}_q}
\begin{document}

\title {On the upper chromatic number and multiple blocking sets of $\PG(n,q)$
 \vspace{3mm}}
\author  {Zolt\'an L.\ Bl\'azsik \and Tam\'as H\'eger \and Tam\'as Sz\H{o}nyi}
\date{\scriptsize Latest update on \today}
\maketitle

\begin{abstract}
We investigate the upper chromatic number of the hypergraph formed by the points and the $k$-dimensional subspaces of $\mathrm{PG}(n,q)$; that is, the most number of colors that can be used to color the points so that every $k$-subspace contains at least two points of the same color. Clearly, if one colors the points of a double blocking set with the same color, the rest of the points may get mutually distinct colors. This gives a trivial lower bound, and we prove that it is sharp in many cases. Due to this relation with double blocking sets, we also prove that for $t\leq \frac38p+1$, a small $t$-fold (weighted) $(n-k)$-blocking set of $\mathrm{PG}(n,p)$, $p$ prime, must contain the weighted sum of $t$ not necessarily distinct $(n-k)$-spaces.
\end{abstract}

\section{Introduction and results}
Throughout the paper, let $\cH$ denote a hypergraph with point set $V$ and edge set $E$. A strict $N$-coloring $\cC$ of $\cH$ is a coloring of the elements of $V$ using exactly $N$ colors; in other words, $\cC=\{C_1,\ldots,C_N\}$ is a partition of $V$ where each $C_i$ is nonempty ($1\leq i\leq N$). Given a coloring $\cC$, we define the mapping $\varphi_\cC\colon V\to \{1,2,\ldots,N\}$ by $\varphi_\cC(P)=i$ if and only if $P\in C_i$. We call the numbers $1,\ldots, N$ colors and the sets $C_1,\ldots C_N$ color classes. We call a hyperedge $H\in E$ \emph{rainbow (with respect to $\cC$)} if no two points of $H$ have the same color; that is, $|H\cap C_i|\leq 1$ for all $1\leq i\leq N$. The upper chromatic number (or shortly UCN) of the hypergraph $\cH$, denoted by $\UCN(\cH)$, is the maximal number $N$ for which $\cH$ admits a strict $N$-coloring without rainbow hyperedges. Let us call such a coloring \emph{proper} or \emph{rainbow-free}. It is easy to see that for an ordinary graph $G$ (that is, a $2$-uniform hypergraph), $\UCN(G)$ is just the number of connected components of $G$.

As one can see, the above defined hypergraph coloring problem is a counterpart of the traditional one, where we seek the least number of colors with which we can color the vertices of a hypergraph while forbidding hyperedges to contain two vertices of the same color. The general mixed hypergraph model, introduced by Voloshin \cite{V1,V2}, combines the above two concepts. This mixed model is better known but here we do not discuss it; the interested reader is referred to \cite{Iljics}.

It is clear that if we find a vertex set $T\subset V$ in $\cH$ which intersects every hyperedge in at least two points, then by coloring the points of $T$ with one color and all the other points of $V$ by mutually distinct colors, we obtain a proper, strict $(|V|-|T|+1)$-coloring.

\begin{definition}\label{transversal}
Let $\cH=(V;E)$ be a hypergraph, $t$ a nonnegative integer. A vertex set $T\subset V$ is called a \emph{$t$-transversal of $\cH$} if $|T\cap H|\geq t$ for all $H\in E$. The size of the smallest $t$-transversal of $\cH$ is denoted by $\tau_t(\cH)$.
\end{definition}

\begin{definition}\label{trivcolor}
We say that a coloring of $\cH$ is \emph{trivial} if it contains a monochromatic $2$-transversal.
\end{definition}

As seen above, the best trivial colorings immediately yield a lower bound for $\UCN(\cH)$.

\begin{proposition}\label{trivbound}
For any hypergraph $\cH$,
\[\UCN(\cH)\geq |V|-\tau_2(\cH)+1.\]
\end{proposition}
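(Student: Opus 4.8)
The plan is to exhibit an explicit proper coloring that uses exactly $|V|-\tau_2(\cH)+1$ colors; the bound then follows immediately from the definition of $\UCN(\cH)$ as the maximum number of colors in a rainbow-free strict coloring. This is precisely the construction already sketched in the paragraph preceding Definition~\ref{transversal}, so the real content is just to verify that it yields a valid strict coloring and that no hyperedge becomes rainbow.

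First I would take a smallest $2$-transversal $T$ of $\cH$, so that $|T|=\tau_2(\cH)$; such a set exists whenever every hyperedge contains at least two points (which holds for the hypergraphs of interest, such as the points and $k$-subspaces of $\PG(n,q)$), since then $V$ itself is a $2$-transversal. I then define a coloring $\cC$ by placing all of $T$ into a single color class $C_1=T$ and giving each of the remaining $|V|-|T|$ points its own private color. This partitions $V$ into $1+(|V|-|T|)=|V|-\tau_2(\cH)+1$ nonempty parts, so $\cC$ is a strict $N$-coloring with $N=|V|-\tau_2(\cH)+1$.

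It then remains to check that $\cC$ is rainbow-free. Let $H\in E$ be an arbitrary hyperedge. Since $T$ is a $2$-transversal, $|H\cap T|\geq 2$, and all points of $T$ share the single color of $C_1$; hence $|H\cap C_1|\geq 2$, so $H$ is not rainbow. As $H$ was arbitrary, $\cC$ is proper, and therefore $\UCN(\cH)\geq N=|V|-\tau_2(\cH)+1$.

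There is essentially no serious obstacle here; the only points needing a moment's care are that a smallest $2$-transversal exists (equivalently, that $\tau_2(\cH)$ is finite, which requires every edge to have at least two vertices) and that the color class $C_1$ is nonempty, which holds because any $2$-transversal of a hypergraph with at least one edge has size at least two. Both are automatic in the projective setting, so the statement follows.
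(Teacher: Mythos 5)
Your proof is correct and is exactly the paper's argument: the paper proves this proposition by the construction sketched just before Definition~\ref{transversal} (color a smallest $2$-transversal monochromatically and give all other vertices distinct colors), which is precisely what you do. Your extra remarks about the existence of a $2$-transversal and the nonemptiness of $C_1$ are harmless added care, not a deviation.
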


Two general problems are to determine whether this bound is sharp (for a particular class of hypergraphs) and to describe the colorings attaining the upper chromatic number. In this paper, the hypergraphs we consider consist of the points of the $n$-dimensional projective space $\PG(n,q)$ over the finite field $\GF(q)$ of $q$ elements with its $k$-dimensional subspaces as hyperedges, $n\geq 2$, $1\leq k\leq n-1$. We denote this hypergraph by $\cH(n,k,q)$; however, we usually take into account the richer structure of $\PG(n,q)$ when working in $\cH(n,k,q)$. The study of this particular case was started in the mid-nineties by Bacs\'o and Tuza \cite{TB}, who established general bounds for the upper chromatic number of arbitrary finite projective planes (considered as a hypergraph whose points and hyperedges are the points and lines of the plane). Let us introduce the notation $\theta_{q,k}=\theta_k=q^k+q^{k-1}+\ldots+q+1=\frac{q^{k+1}-1}{q-1}$ for the number of points in a $k$-dimensional projective space of order $q$. We recall that a projective plane of order $q$ has $\theta_2=q^2+q+1$ points.

\begin{result}[Bacs\'o, Tuza \cite{TB}]
Let $\Pi_q$ be an arbitrary finite projective plane of order $q$, and let $\tau_2(\Pi_q)=2(q+1)+c(\Pi_q)$. Then
\[\UCN(\Pi_q)\leq q^2-q-\frac{c(\Pi_q)}{2}+o(\sqrt{q}).\]
\end{result}

Note that Proposition \ref{trivbound} claims $\UCN(\Pi_q)\geq q^2-q-c(\Pi_q)$. Recently, Bacs\'o, H\'eger, and Sz\H{o}nyi have obtained exact results for the Desarguesian projective plane $\PG(2,q)$.

\begin{result}[Bacs\'o, H\'eger, Sz\H{o}nyi \cite{BHSz}]\label{BHSz}
Let $q=p^h$, $p$ prime. Suppose that either $q>256$ is a square, or $p\geq 29$ and $h\geq3$ odd. Then $\UCN(\PG(2,q))=\theta_2-\tau_2(\PG(2,q))+1$, and equality is reached only by trivial colorings.
\end{result}

In this work, we determine $\UCN(\cH(n,k,q))$ for many parameters and aim not only to characterise trivial colorings as the only ones achieving the upper chromatic number of the hypergraph $\cH(n,k,q)$, but to obtain results showing that proper colorings of $\cH(n,k,q)$ using a little less number of colors than $\UCN(\cH(n,k,q))$ are trivial; in other words, to prove that trivial colorings are stable regarding the number of colors. For the sake of convenience, we will formulate our results in three theorems for the hypergraph $\cH(n,n-k,q)$. Let us note that if $k<\frac{n}{2}$, then $\tau_2(\cH(n,n-k,q))=2\theta_k$, where equality can be reached by the union of two disjoint $k$-spaces, but not much is known if $k\geq \frac{n}{2}$; some details are given in Section \ref{sec:blsets}.

\begin{theorem}\label{ucnthm}
Let $n\geq 3$, $1\leq k<\frac{n}{2}$, and assume that $q\geq 17$ if $k=1$ and $q\geq 13$ if $k\geq 2$. Then
\[
\UCN(\cH(n,n-k,q))=\theta_n-\tau_2(\cH(n,n-k,q))+1=\theta_n-2\theta_k + 1.
\]
\end{theorem}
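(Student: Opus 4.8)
The lower bound needs no work: by Proposition~\ref{trivbound} together with the fact, recalled above, that $\tau_2(\cH(n,n-k,q))=2\theta_k$ for $k<\frac n2$ (attained by the union $\Sigma_1\cup\Sigma_2$ of two disjoint $k$-spaces, each $(n-k)$-space meeting both since $k+(n-k)=n$), colouring $\Sigma_1\cup\Sigma_2$ with one colour and every remaining point with its own colour gives a rainbow-free colouring with $\theta_n-2\theta_k+1$ colours. Hence the entire content of the theorem is the reverse inequality $\UCN(\cH(n,n-k,q))\le\theta_n-2\theta_k+1$.

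So fix an arbitrary rainbow-free colouring $\cC=\{C_1,\dots,C_N\}$ and set $E=\sum_i(|C_i|-1)=\theta_n-N$; the goal becomes $E\ge 2\theta_k-1$. First I would record the one structural fact that is genuinely easy. Let $B$ be the union of the colour classes of size at least $2$ and let $m$ be their number, so that $E=|B|-m$. Every $(n-k)$-space $H$ carries two points of a common colour, which lie in a single class of size $\ge 2$ and hence in $B$; thus $|H\cap B|\ge 2$, i.e. $B$ is a double blocking set and $|B|\ge 2\theta_k$. This already gives $E\ge 2\theta_k-m$, which is exactly the desired bound when $m=1$. All the difficulty is therefore concentrated in the case $m\ge 2$, where the compulsory monochromatic pairs are spread over several colours.

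To handle $m\ge 2$ the plan is to pin the exact constant $2$, and here I expect the main obstacle to lie. If some class has at least $2\theta_k$ points then $E\ge 2\theta_k-1$ and we are done, so assume every class is smaller. The naive incidence count --- summing over monochromatic pairs $\{P,Q\}$ the number of $(n-k)$-spaces through the line $PQ$ --- gives $\sum_i\binom{|C_i|}{2}\ge M/\lambda$, where $M$ is the number of $(n-k)$-spaces and $\lambda$ the number of them through a fixed line; but because $\binom{s}{2}$ is quadratic in $s$ while $E$ is linear, this is off by a constant factor and only yields a lower bound of order $\theta_k$, not $2\theta_k$. In particular it cannot distinguish a genuine double blocking colour class from a split such as two $k$-spaces painted in two different colours (which has $E=2\theta_k-2$ and still satisfies the incidence inequality, yet is not rainbow-free). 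Excluding these split configurations is the crux, and the route I would take is to translate $\cC$ into a weighted double blocking set of weight close to $E$ and invoke the $t=2$ case of the (weighted) multiple blocking set theorem announced in the abstract: it forces such a set to contain the weighted sum of two $k$-spaces, i.e. forces a single colour class to be a double blocking set. Complementarily, when $B$ is near-minimal one can apply the stability of minimal double blocking sets (two disjoint $k$-spaces for $k<\frac n2$, $q$ prime or large) to write $B=\Sigma_1\cup\Sigma_2$ up to a few points; since a generic $(n-k)$-space meets $B$ in exactly one point of each $\Sigma_i$, the monochromatic-pair requirement forces these two points to share a colour, so $\Sigma_1$ and $\Sigma_2$ lie in one class and $m=1$ after all. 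I expect the delicate point --- and the reason for the precise thresholds $q\ge 17$ ($k=1$), $q\ge 13$ ($k\ge 2$) --- to be making this exact, prime-field blocking-set input uniform over all $m\ge 2$ and over all near-extremal shapes of $B$.
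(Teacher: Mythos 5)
Your lower bound and the reduction to bounding $N$ from above are fine, and your overall plan (force $\cB$ to contain two disjoint $k$-spaces via a blocking-set structure theorem, then argue that this makes the colouring trivial) is indeed the paper's strategy in outline. The genuine gap is quantitative: every structure theorem you could invoke --- Klein--Metsch (Result \ref{KM}), which is what the paper actually uses here, or Theorem \ref{blsetthm}, which in any case only covers prime $q$ and so cannot serve for general $q\ge 13$ --- requires an \emph{upper} bound on $|\cB|$ of roughly $2\theta_k+O(q^{k-1})$ before it says anything, and you never produce one. Your only estimate is the lower bound $|\cB|\ge 2\theta_k$. You correctly observe that the convexity/incidence count falls short (it yields $|\cB|\lesssim(4-\sqrt{2})q^k\approx 2.59\,q^k$, cf.\ Proposition \ref{big}), but you do not replace it with anything concrete. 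The paper bridges exactly this gap with Harrach's unique-reducibility theorem (Result \ref{Harrach}) combined with Lemma \ref{esspt}: any colour class containing an essential point of $\cB$ must have at least $3q^k+\theta_{k-1}+1-|\cB|$ points, which sharpens the count of colour classes (Proposition \ref{medium}, packaged in Lemma \ref{p38works}) and squeezes $|\cB|$ down to $2\theta_k$ once $d\le-1$. Without this step neither branch of your plan for $m\ge 2$ can start.

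A secondary, smaller issue: once $\cB=\Sigma_1\cup\Sigma_2$ is known, your closing argument (``a generic $(n-k)$-space meets $B$ in exactly one point of each $\Sigma_i$, so the two points share a colour, so $m=1$'') is only a gesture. Many pairwise colour coincidences across different subspaces do not by themselves make $\Sigma_1\cup\Sigma_2$ monochromatic; one needs the counting step of Proposition \ref{small}: by Lemma \ref{esspt} a single red essential point of $\Sigma_1$ already forces at least $3q^k-|\cB|+\theta_{k-1}$ red points inside $\Sigma_2$, and running this for both colours and both spaces gives $4(3q^k-|\cB|+\theta_{k-1})\le|\cB|$, which contradicts $|\cB|\le 2\theta_k$ unless the colouring is trivial. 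This part is recoverable with more work, but the missing upper bound on $|\cB|$ in the first paragraph is the fatal gap.
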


\begin{theorem}\label{ucnstabp}
Let $n\geq 2$, $q=p^h$, $p$ prime, $1\leq k\leq n-1$. Suppose that
\begin{itemize}
\item  $\delta = \frac12\left( (\sqrt{2}-1)q^k - 3\theta_{k-1} - 8 \right)\geq0$ and $q\geq 11$ if $h=1$,
\item  $\delta = \frac12\left(q^{k-1}-\theta_{k-2}-3\right)$, $k\geq 2$ and $q\geq 25$ if $h\geq2$.
\end{itemize}
Under these assumptions the following hold:
\begin{compactenum}
\item[a)] If $k < \frac{n}{2}$, then any rainbow-free coloring of $\cH(n,n-k,q)$ using
\[
N\geq \theta_n-\tau_2(\cH(n,n-k,q))+1-\delta = \theta_n-2\theta_k + 1 - \delta
\]
colors contains a monochromatic pair of disjoint $k$-spaces, and hence is trivial.
\item[b)] If $k \geq \frac{n}{2}$, then
\[
\UCN(\cH(n,n-k,q)) < \theta_n-2\theta_k + 1 - \delta.
\]
\end{compactenum}
\end{theorem}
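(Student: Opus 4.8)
The plan is to convert a rainbow-free coloring into a small weighted double $(n-k)$-blocking set, apply the multiple blocking set stability theorem announced in the abstract (and established in Section~\ref{sec:blsets}), and read the two disjoint monochromatic $k$-spaces off the resulting structure. Throughout, write $e=\theta_n-N=\sum_{C}(|C|-1)$ for the \emph{excess} of the coloring $\cC$, so that the hypothesis $N\geq\theta_n-2\theta_k+1-\delta$ becomes $e\leq 2\theta_k-1+\delta$, with $\delta$ chosen in each of the two cases ($h=1$ and $h\geq2$) to be exactly the deficiency tolerated by the corresponding stability statement.

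For the basic reduction I would choose one representative from each color class; the set $D$ of non-representatives has $|D|=e$, and since every $(n-k)$-space carries a repeated color, at least one of the two equal-colored points is a non-representative. Thus $D$ meets every $(n-k)$-space, i.e.\ $D$ is a $1$-fold $(n-k)$-blocking set of weight at most $2\theta_k-1+\delta$. The point I expect to be the main obstacle is upgrading this to a \emph{double} blocking set of weight still close to $2\theta_k$ while controlling classes of small size. Here I would use a double count: a monochromatic pair $\{P,Q\}$ is contained in only $\qbinom{n-1}{k}$ of the $\qbinom{n+1}{k}$ many $(n-k)$-spaces, a fraction of order $q^{-2k}$, whereas $e\approx 2q^k\ll q^{2k}$; since a class of size $c$ contributes only $\binom{c}{2}$ pairs, small classes are far too inefficient to cover all $(n-k)$-spaces. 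This forces the excess to concentrate in a single color class $C_1$ whose point set is, up to an error absorbed into $\delta$, a double $(n-k)$-blocking set, the stray contributions of the other non-singleton classes being bounded by the same counting. Verifying that the weighted set built from $C_1$ genuinely blocks every $(n-k)$-space twice is the technical heart of the argument.

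With this weighted double $(n-k)$-blocking set in hand I would feed it into the stability theorem of Section~\ref{sec:blsets}, used with $t=2$ (the constraint $t\leq\tfrac38 p+1$ is satisfied since $p\geq11$): a small weighted double $(n-k)$-blocking set is the weighted sum of two subspaces of the relevant dimension $k$. Because the set was extracted from the single class $C_1$, these two $k$-spaces are monochromatic. It remains to argue they are genuinely disjoint rather than one $k$-space counted twice: a single $k$-space meets a generic $(n-k)$-space in exactly one point, so as an honest point set a doubled space fails to block twice; minimality of the weight therefore forces $\Sigma_1\cap\Sigma_2=\emptyset$. A monochromatic pair of disjoint $k$-spaces is a monochromatic $2$-transversal, so the coloring is trivial, which proves part~a).

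For part~b) I would run the very same reduction. When $k\geq\frac{n}{2}$ any two $k$-spaces meet, since $\dim(\Sigma_1\cap\Sigma_2)\geq 2k-n\geq0$, so a pair of disjoint $k$-spaces simply cannot exist. Hence a rainbow-free coloring with $N\geq\theta_n-2\theta_k+1-\delta$ colors would, by the argument of part~a), produce a monochromatic pair of disjoint $k$-spaces—an impossibility. Therefore no such coloring exists, and $\UCN(\cH(n,n-k,q))<\theta_n-2\theta_k+1-\delta$, as claimed.
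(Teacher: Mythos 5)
Your overall architecture (rainbow-free coloring $\to$ small double $(n-k)$-blocking set $\to$ stability theorem $\to$ two disjoint monochromatic $k$-spaces) matches the paper's, and your treatment of part b) and of disjointness is fine. But the two steps you flag as ``the main obstacle'' and ``the technical heart'' are exactly the places where your argument has genuine gaps, and the way you propose to fill them does not work.

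First, the double blocking set should not be built from a single color class. The paper takes $\cB=\cup_{i=1}^m C_i$, the union of \emph{all} color classes of size at least two; this is a $2$-fold $k$-blocking set for free, since every $(n-k)$-space contains two equicolored points and both lie in some non-singleton class. Your claim that the pair-counting argument ``forces the excess to concentrate in a single color class $C_1$'' whose point set is a double blocking set is false as stated: the convexity count (the paper's Lemma \ref{maxcolored} and Proposition \ref{big}) only yields an \emph{upper bound} on $|\cB|$, ruling out a middle range of sizes; a priori the two blocking points of a given $(n-k)$-space can come from many different classes of size $2$ or $3$. Concentration in one class is essentially the conclusion of the theorem (triviality), not something available at this stage. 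Moreover, bounding $|\cB|$ by roughly $2\theta_k+2\delta$ is not a one-shot count: the paper needs a second stage (Proposition \ref{medium}) that invokes Harrach's unique reducibility theorem and the essential-point Lemma \ref{esspt} to push the bound from about $(4-\sqrt2)q^k$ down to $2\theta_k+2d+O(1)$ before the stability Theorem \ref{blsetthm} (or Result \ref{KM} when $h\ge2$) applies.

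Second, even once $\cB$ is known to contain two disjoint $k$-spaces $U_1,U_2$, nothing yet says they are monochromatic of a single common color: they sit inside the union of many color classes. This is the content of Proposition \ref{small}, whose proof again uses Lemma \ref{esspt}: a simple essential point $P$ of one color forces at least $3q^k+\theta_{k-1}-|\cB|$ further points of the \emph{same} color, all lying in the opposite $k$-space; if two colors occurred among the essential points this would force $|\cB|\ge 2.4q^k+0.8\theta_{k-1}$, contradicting the size bound. Your proposal has no analogue of this step---you derive monochromaticity from the unproven single-class concentration. So the missing ingredient in both gaps is the same: the use of Harrach's unique minimal reduction via Lemma \ref{esspt}, which is what actually ties the combinatorics of the coloring to the structure of the blocking set.
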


Note that the stability gap $\delta$ in the above result is far much weaker in the non-prime case (in particular, the case $k=1$ is missing). The next theorem gives a much better result at the expense of requiring much stronger assumptions on the order and the characteristic of the field.

\begin{theorem}\label{ucnstabq}
Let $n\geq 2$, $q=p^h$, $p$ prime, $h\geq 2$, $1\leq k\leq n-1$. Suppose that $p\ge11$, $q^k\ge 239$ and $\delta=\frac{q^k}{200} - \theta_{k-1}-\frac32$. Then any rainbow-free coloring of $\cH(n,n-k,q)$ using
\[
N\geq \theta_n-2\theta_k + 1 - \delta
\]
colors contains a monochromatic $2$-transversal, and hence is trivial.
\end{theorem}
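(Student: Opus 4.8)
The plan is to convert the coloring hypothesis into a statement about double blocking sets and then exploit the stability theory for blocking sets of $\PG(n,p^h)$, $h\ge2$, developed in Section~\ref{sec:blsets}.

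\emph{Reduction to a double blocking set.} First I would fix a rainbow-free coloring $\cC$ with $N$ colors and let $U$ be the union of all color classes of size at least two, with $m$ the number of these non-singleton classes. Because no $(n-k)$-space is rainbow, every $(n-k)$-space carries a monochromatic pair, whose color class is non-singleton; hence $U$ meets each $(n-k)$-space in at least two points, i.e.\ $U$ is a $2$-transversal of $\cH(n,n-k,q)$. Counting points by singletons against $U$ gives $\theta_n=|U|+(N-m)$, so the \emph{excess}
\[
\sigma:=\sum_{|C_i|\ge2}(|C_i|-1)=|U|-m=\theta_n-N\le 2\theta_k-1+\delta
\]
is only marginally above the value $2\theta_k-1$ realised by a single monochromatic double blocking set. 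Deleting one point from each non-singleton class leaves a set $U'$ of size $\sigma$ that still blocks every $(n-k)$-space, since the surviving point of each witnessing pair remains; thus $U'$ is an ordinary blocking set of size at most $2\theta_k-1+\delta$, barely above the size $2\theta_k$ of two disjoint $k$-spaces.

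\emph{Forcing two $k$-spaces.} The core difficulty is that the rainbow-free condition is far stronger than the mere statement that $U$ is a double blocking set: the monochromatic pairs must be positioned so that \emph{every} $(n-k)$-space contains one. Since a fixed pair $\{x,y\}$ witnesses only those $(n-k)$-spaces through the line $\langle x,y\rangle$, a covering/counting argument—comparing the number of $(n-k)$-spaces with the number that a bounded-excess family of pairs can cover—shows that the excess must concentrate: the number $m$ of non-singleton classes is forced to be small, so $|U|=\sigma+m$ stays below the stability threshold $\approx 3\theta_k$, and the pairs must line up. I would then feed $U$ into the multiple-blocking-set stability theorem for $q=p^h$, $h\ge2$, $p\ge11$ (the engine behind the $q^k/200$ term). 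Being within $\delta$ of the extremal size $2\theta_k$, $U$ must contain two $k$-spaces $S_1,S_2$, which can be taken disjoint when $k<\tfrac n2$, in which case $S_1\cup S_2$ is itself a double blocking set; the case $k\ge\tfrac n2$ is handled through the corresponding minimal double blocking set structure supplied by the same result.

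\emph{Concentration into one color class.} It remains to check that $S_1\cup S_2$ is monochromatic. Here I would use that $U\setminus(S_1\cup S_2)$ has only about $\delta$ points beyond the two $k$-spaces, so for most cross-pairs $(p_1,p_2)\in S_1\times S_2$ there is an $(n-k)$-space $\Pi$ with $\Pi\cap U=\{p_1,p_2\}$; as $\Pi$ is not rainbow, $p_1$ and $p_2$ must share a color. Running this over sufficiently many cross-pairs propagates a single color across $S_1\cup S_2$, so one color class contains the double blocking set $S_1\cup S_2$ and is therefore a monochromatic $2$-transversal, making $\cC$ trivial.

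\emph{Main obstacle.} The delicate points are quantitative rather than structural: controlling the excess so that $m$ (equivalently $|U|$) stays below the threshold where the $h\ge2$ stability theorem applies with the stated constant, and ensuring that the error term $\delta$ still leaves room to produce the clean $(n-k)$-spaces $\Pi$ needed in the final concentration step. Matching these two budgets against the single slack parameter $\delta=\tfrac{q^k}{200}-\theta_{k-1}-\tfrac32$ is what pins down the precise hypotheses $p\ge11$ and $q^k\ge239$.
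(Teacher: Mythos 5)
Your opening reduction (forming the union $U=\cB$ of the non-singleton color classes, computing the excess, and bounding $|U|$ by a covering/counting argument over the pairs) matches the paper's setup via Lemma \ref{maxcolored} and Propositions \ref{big} and \ref{medium}. The proof breaks down, however, at the step you label ``Forcing two $k$-spaces.'' You invoke ``the multiple-blocking-set stability theorem for $q=p^h$, $h\ge2$, $p\ge11$ (the engine behind the $q^k/200$ term)'' to conclude that a double $k$-blocking set of size at most $2\theta_k+\delta$ with $\delta\approx q^k/200$ must contain two $k$-spaces. No such theorem exists, and the statement is false for non-prime $q$: already for $k=1$, $n=2$ and $q$ a square, the union of two disjoint Baer subplanes is a double blocking set of size $2q+2\sqrt{q}+2\le 2\theta_1+\delta$ containing no line, and more generally linear blocking sets give small double $k$-blocking sets free of $k$-spaces (cf.\ the constructions in \cite{DBHSzVdV}). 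The only stability results available are Theorem \ref{blsetthm}, which requires $q=p$ prime, and Result \ref{KM}, whose gap for $h\ge2$ is only of order $q^{k-1}$ --- exactly why Theorem \ref{ucnstabp} has the weaker $\delta$ in the non-prime case. Since your final ``concentration'' step is built on the two $k$-spaces $S_1,S_2$, the whole second half of the argument collapses. (Your aside that the case $k\ge\frac n2$ is ``handled through the corresponding minimal double blocking set structure supplied by the same result'' is likewise unsupported; essentially nothing is known about the structure of small double $k$-blocking sets when $k\ge\frac n2$.)

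The actual proof takes a genuinely different route precisely to avoid this nonexistent structure theorem. It first reduces every $n$ to the case $n=2k$ (embedding when $n<2k$, a cone construction over a vertex $\cV$ when $n>2k$, which introduces weighted points), then chooses a hyperplane $H$ meeting $\cB$ in at most $2.2\theta_{k-1}$ points and a Desarguesian $(k-1)$-spread of $H$, and passes to the Andr\'e--Bruck--Bose plane $\Pi\simeq\PG(2,q^k)$. There, instead of locating $k$-spaces inside $\cB$, it works with the minimal weighted double blocking set $\wh{\cB}$ and shows \emph{directly} that $\wh{\cB}$ is monochromatic: the $t\pmod p$ Result \ref{FSSzW} guarantees many bisecants through each point and forces long secants to have length at least $p^e+2$, and a weighted count over bisecants and ``almost red'' long secants yields inequality (\ref{4soros}), whose leading term $\wt{q}^2p^e(-1+16\gamma+16\gamma^2)$ is negative for $\gamma=\frac1{100}$, giving the contradiction; Proposition \ref{traceback} then transfers monochromaticity back to $\cC$. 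If you want to salvage your outline, the part that can be kept is the size bound on $\cB$; the structural step must be replaced by an argument that never assumes $\cB$ contains any $k$-space.
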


The requirements on $q$ and $N$ in the above theorem could be chosen differently, see Remark \ref{ucnthm:remark} for the details. Note that Theorem \ref{ucnstabq} is not phrased in terms of $\tau_2(\cH(n,n-k,q))$, the parameter found in the trivial lower bound Proposition \ref{trivbound}. As noted earlier, $\tau_2(\cH(n,n-k,q))=2\theta_k$ if $k<\frac{n}{2}$. If $k=\frac{n}{2}$, then \cite[Corollary 4.13]{DBHSzVdV} asserts the existence of a $2$-fold $k$-blocking set in $\PG(2k,q)$ (in finite geometrical language, $t$-transversals of $\cH(n,n-k,q)$ are called $t$-fold $k$-blocking sets, see Section \ref{sec:blsets}) of size $2q^k + 2\frac{q^k-1}{p-1}$, where $q=p^h$, $p>5$ prime, $h\geq2$. Thus, if $p \geq 409$, then $\tau_2(\cH(2k,k,q))\leq 2\theta_k + \delta$, whence Theorem \ref{ucnstabq} yields that the trivial bound is again sharp for $\cH(2k,k,q)$, regardless the exact value of $\tau_2(\cH(2k,k,q))$.

In the proof of the above theorem, we rely on weighted $2$-fold blocking sets as well, so we devote the next section to this topic, and we obtain the following new result which, in fact, follows from the similar Theorem \ref{tmodpsetthm} about $t \pmod p$ sets. The precise definitions are given in the next section.

\begin{theorem}\label{blsetthm}
Let $\cB$ be a minimal weighted $t$-fold $k$-blocking set of $\PG(n,p)$, $p$ prime. Assume that $|\cB| \le (t+\frac{1}{2})p^{k}-\frac12$ and $t\leq \frac38p+1$. Then $\cB$ is the (weighted) union of $t$ not necessarily distinct $k$-dimensional subspaces.
\end{theorem}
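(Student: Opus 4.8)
The plan is to deduce the statement from the companion structure theorem for weighted $t\pmod p$ sets (Theorem~\ref{tmodpsetthm}); the argument then splits into a bridge and the structure result itself. The bridge is to show that, under the hypotheses, every $(n-k)$-subspace meets $\cB$ in weight $\equiv t \pmod p$, so that $\cB$ is a $t\pmod p$ set. Minimality is the point of entry: for each point $P$ of positive weight there is a \emph{tangent} $(n-k)$-subspace through $P$ carrying weight exactly $t$, since otherwise the weight at $P$ could be lowered. Passing to the affine part of $\cB$ with respect to a suitable hyperplane at infinity and attaching to it a R\'edei-type polynomial in several variables, the bound $|\cB|\le (t+\tfrac12)p^{k}-\tfrac12$ controls its degree, and the polynomial method --- the theory of fully reducible lacunary polynomials over $\GF(p)$, or the Sz\H{o}nyi--Weiner resultant technique --- forces each $(n-k)$-subspace to carry weight $\equiv t \pmod p$. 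As $\cB$ is $t$-fold blocking, these weights lie in $\{t,\,t+p,\,t+2p,\dots\}$, and the size hypothesis keeps them equal to $t$ as often as possible.

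Granting that $\cB$ is a $t\pmod p$ set, I would prove the structure by induction on $t$. Note first that minimality together with $t\le \tfrac38 p+1<p$ bounds every point weight by $t$, so all weights stay strictly below $p$ and reduction modulo $p$ is faithful throughout. The inductive step rests on a single lemma: a weighted $t\pmod p$ set with $t\ge1$ and weight at most $(t+\tfrac12)p^{k}-\tfrac12$ contains a $k$-subspace $\pi$ in its support, i.e. with $w(Q)\ge1$ for all $Q\in\pi$. Given this, $\cB-\chi_\pi$ drops the weight of each $(n-k)$-subspace $S$ by $|\pi\cap S|\equiv 1\pmod p$, hence is a nonnegative $(t-1)\pmod p$ set whose total weight falls by $\theta_k=p^{k}+\theta_{k-1}$, comfortably inside the bound $((t-1)+\tfrac12)p^{k}-\tfrac12$. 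Applying the inductive hypothesis writes $\cB-\chi_\pi$ as a weighted union of $t-1$ subspaces, whence $\cB$ is a weighted union of $t$. The induction terminates at $t=0$, where a nonnegative $0\pmod p$ set with all weights below $p$ and total weight under $\tfrac12 p^{k}-\tfrac12$ must vanish.

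The crux is therefore the lemma locating a $k$-subspace inside $\cB$, and here I would reduce the dimension by hyperplane sections. If $\Pi$ is a hyperplane, then $\cB\cap\Pi$ is a $t\pmod p$ set of $\Pi\cong\PG(n-1,p)$ with respect to its $(n-k)$-subspaces --- each already carries weight $\equiv t\pmod p$ in the ambient space --- and in $\Pi$ these subspaces have codimension $k-1$. Iterating sections carries the parameter pair $(n,k)$ down to $(n-k+1,1)$, that is, to the hyperplane case, and ultimately to $t\pmod p$ sets of lines in $\PG(2,p)$. The planar base case --- a small weighted $t\pmod p$ set of $\PG(2,p)$ is a weighted sum of $t$ lines --- is exactly where the hypothesis $t\le\tfrac38 p+1$ is spent: the R\'edei polynomial of the affine part is fully reducible and lacunary, and the Blokhuis--Storme--Sz\H{o}nyi analysis of such polynomials produces a linear factor (a line) precisely when the degree, governed by $t$ and the weight, stays under the threshold encoded by $\tfrac38 p$.

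I expect the decisive difficulty to be this structure theorem rather than the bridge, and within it the gap between the congruence and the exact conclusion. The condition ``$\equiv t\pmod p$'' alone is far too weak: one may add $p$ copies of any subspace, or a copy of the whole space (shifting $t$ by one), without violating it. Thus the weight bound $(t+\tfrac12)p^{k}-\tfrac12$ and the ceiling $t\le\tfrac38 p+1$ have to be used decisively to exclude every such alternative and isolate the union-of-subspaces configuration. Making the lacunary-polynomial factorization succeed under exactly these thresholds in the weighted setting, and ensuring that the hyperplane-section induction yields a genuine global $k$-subspace rather than merely compatible lower-dimensional data, is the delicate heart of the proof.
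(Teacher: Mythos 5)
Your high-level architecture coincides with the paper's: both first establish that $\cB$ is a $t\pmod p$ set with respect to $(n-k)$-spaces (the paper simply cites Result~\ref{FSSzW}, whose size hypothesis is exactly $|\cB|\le(t+\frac12)p^k-\frac12$, so your polynomial-method ``bridge'' is a re-derivation of a quoted result and is fine), and both then prove a structure theorem for small $t\pmod p$ sets. But your proof of that structure theorem has a genuine gap at precisely the step you identify as the crux. You induct on $t$ by peeling off a $k$-subspace contained in the support, and you propose to locate that subspace by hyperplane sections down to $\PG(2,p)$. This reduction fails twice. First, the size hypothesis does not transfer: the inductive bound in $\Pi\cong\PG(n-1,p)$ is of order $(t+\frac12)p^{k-1}$, while $|\cB\cap\Pi|$ can be as large as $\theta_k\approx p^k$ (e.g.\ when $\Pi$ contains one of the very $k$-subspaces you seek), and even an averaging choice of $\Pi$ only gives $|\cB\cap\Pi|\lesssim|\cB|/p$, which sits at the threshold with no room for error terms. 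Second, even if every section decomposes into $(k-1)$-subspaces, you supply no mechanism for assembling these into a single $k$-subspace of $\PG(n,p)$ lying in the support of $\cB$; you flag this as ``the delicate heart'' but do not resolve it. In addition, the planar base case you invoke --- that a weighted $t\pmod p$ set of $\PG(2,p)$ of the stated size is a sum of $t$ lines for all $t\le\frac38p+1$ --- is not an off-the-shelf consequence of the lacunary-polynomial literature at this threshold; it is essentially the content that has to be proved. The $t=0$ termination of your induction also needs an argument: a $0\pmod p$ set of total weight below $\frac12p^k$ need not vanish in general (two points of weight $p$ in a plane), so you must explicitly exploit the bound $w(P)\le t\le\frac38p+1$ on individual weights.

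For contrast, the paper sidesteps all of this. Its Theorem~\ref{tmodpsetthm} inducts on the codimension $k$ by \emph{projection from a point outside $\cB$} rather than by hyperplane section; projection preserves the total weight exactly, so the size hypothesis carries over verbatim. The induction is only asked to prove the exact count $|\cB|=t\theta_{n-k}$, after which the entire structural decomposition is delegated to the Klein--Metsch Result~\ref{KM} (applicable with $r=0$ once the size is exactly $t\theta_{n-k}$, with Blokhuis' Result~\ref{Husi} supplying its hypothesis \emph{c)}). The base case $k=1$, where the constant $\frac38$ actually originates, is proved by a bespoke counting argument in a plane --- two long lines plus at least $\frac12(p+1)$ full lines, played against the dual blocking set formed by the non-$t$-secants --- not by R\'edei polynomials. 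Your induction on $t$ is an attractive alternative decomposition, but as written the existence of the full $k$-subspace, the only step doing real work, is assumed rather than proved.
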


\section{Small, weighted multiple $(n-k)$-blocking sets}\label{sec:blsets}

For the sake of convenience, we will refer to $(n-k)$-blocking sets instead of $k$-blocking sets throughout this section.

\subsection{Preliminary notation and results}

\begin{definition}
An \emph{$m$-space} is a subspace of $\PG(n,q)$ of dimension $m$ (in projective sense). A point-set $\cB$ in $\PG(n,q)$ is called a \emph{$t$-fold $(n-k)$-blocking set} if every $k$-space intersects $\cB$ in at least $t$ points. A point $P$ of $\cB$ is \emph{essential} if $\cB\setminus\{P\}$ is not a $t$-fold $(n-k)$-blocking set; in other words, if there is a $k$-space through $P$ that intersects $\cB$ in precisely $t$ points. $\cB$ is called \emph{minimal}, if all of its points are essential; in other words, if $\cB$ does not contain a smaller $t$-fold $(n-k)$-blocking set.
\end{definition}

In $\PG(n,q)$, every $(n-k)$-space intersects every $k$-space non-trivially. If $n-k<\frac{n}{2}$, it is easy to find two (or more, say, $t$) disjoint $(n-k)$-spaces, whose union is clearly a $2$-fold (or $t$-fold) $(n-k)$-blocking set of size $2\theta_{n-k}$. If $n-k\geq \frac{n}{2}$, this does not work and, in fact, not much is known even about the size of a smallest double $(n-k)$-blocking set, let alone its structure. Even for the particular case $n=2k$, the first and, so far, only general construction for small double $(n-k)$-blocking sets appeared in \cite{DBHSzVdV}. Note that, however, \emph{weighted} $t$-fold blocking sets can be obtained easily in this way.

\begin{definition}
A weighted point set of $\PG(n,q)$ is a multiset $\cB$ of the points of $\PG(n,q)$. We may refer to the multiplicities of the points of $\cB$ via a function $w=w_\cB$ mapping the point set of $\PG(n,q)$ to the set of non-negative integers, where $w$ is also called a weight function; points not contained in $\cB$ have weight zero by $w$ and, vice versa, zero weight points are considered to be not in $\cB$. We call a weighted point set $\cB$ of $\PG(n,q)$ a weighted $t$-fold $(n-k)$-blocking set if for every $k$-space $U$, $\sum_{P\in U}w(P)\geq t$, and $\cB$ is called minimal if decreasing the weight of any point results in a $k$-space violating the previous property; in other words, if $\cB$ does not contain a strictly smaller $t$-fold $(n-k)$-blocking set, where the size of a weighted point set is defined as the sum of weights in it. Also, for any point set $S$, $|S\cap\cB|$ is defined as $\sum_{P\in S}w(P)$, and in general, any quantity referring to a number of points of $\cB$ is usually considered with multiplicities. E.g., an $i$-secant line $\ell$ (with respect to $\cB$) is a line such that $|\ell\cap\cB|=i$.
\end{definition}

We also refer to $1$-fold and $2$-fold blocking sets as blocking sets and double blocking sets, respectively; the term multiple blocking set refers to a $t$-fold blocking set with $t\geq 2$. We call a point of weight one simple. It is easy to see that a weighted $t$-fold $k$-blocking set must contain at least $t\theta_k$ points unless $t\geq q+1$. We include this supposedly folklore result with proof for the sake of completeness.
\begin{proposition}\label{folklore}
Let $\cB$ be a $t$-fold $(n-k)$-blocking set in $\PG(n,q)$. If $t\leq q$, then $|\cB|\geq t\theta_{n-k}$.
\end{proposition}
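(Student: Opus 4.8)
The plan is to prove the (formally more general) weighted statement by induction, reducing the ambient space one dimension at a time via projection from a point outside $\cB$. Since the weighted case costs nothing extra and subsumes the point-set case, throughout I count sizes and intersection numbers with the weights $w=w_\cB$. I would induct on $k$, holding the blocking dimension $n-k$ fixed: each projection step replaces $\PG(n,q)$ by $\PG(n-1,q)$ and $k$-spaces through the centre by $(k-1)$-spaces, so the relevant parameter $k$ drops by one while $n-k$ is preserved.

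For the base case $k=0$, every point $P$ of $\PG(n,q)$ is a $0$-space and hence must satisfy $w(P)\ge t$, giving $|\cB|\ge t\theta_n=t\theta_{n-0}$. For the inductive step with $k\ge1$, I would first dispose of the degenerate branch: if $w(P)\ge1$ for \emph{every} point $P$, then $|\cB|\ge\theta_n$, and since $t\le q$ yields $t\theta_{n-k}\le q\theta_{n-k}=\theta_{n-k+1}-1\le\theta_n-1<\theta_n$ (using $k\ge1$), the bound follows at once. Otherwise there is a point $P$ with $w(P)=0$, and I fix a hyperplane $H\cong\PG(n-1,q)$ with $P\notin H$ and let $\pi$ denote projection from $P$ onto $H$. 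I define a weighted set $\cB'$ on $H$ by $w'(Q)=\sum_{R\in\pi^{-1}(Q)}w(R)$; because $w(P)=0$ and every point other than $P$ projects to a unique point of $H$, one gets $|\cB'|=|\cB|$.

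The heart of the step is to verify that $\cB'$ is again a $t$-fold blocking set of the correct dimension. For any $(k-1)$-space $W\subseteq H$, the join $\langle P,W\rangle$ is a $k$-space of $\PG(n,q)$ and $\langle P,W\rangle\setminus\{P\}$ projects exactly onto $W$; hence $\sum_{Q\in W}w'(Q)=\sum_{R\in\langle P,W\rangle}w(R)\ge t$. Thus $\cB'$ meets every $(k-1)$-space of $\PG(n-1,q)$ in at least $t$ points, i.e. it is a $t$-fold $(n-k)$-blocking set of $\PG(n-1,q)$ (note $(n-1)-(k-1)=n-k$). The induction hypothesis gives $|\cB'|\ge t\theta_{n-k}$, and therefore $|\cB|=|\cB'|\ge t\theta_{n-k}$, completing the induction.

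This argument has no serious obstacle; the delicate points are purely bookkeeping. First, one must track that projection from $P$ preserves both the multiplicity $t$ and the blocking dimension $n-k$ — the dimension is preserved precisely because the ambient dimension and the dimension of the blocked subspaces both drop by one. Second, the hypothesis $t\le q$ is used in exactly one place, namely the branch where no external point $P$ exists; there the bound rests on $q\theta_{n-k}=\theta_{n-k+1}-1<\theta_n$, an inequality that fails for $t\ge q+1$, consistent with the whole space being a $(q+1)$-fold $(n-k)$-blocking set of size $\theta_n<(q+1)\theta_{n-k}$.
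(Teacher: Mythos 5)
Your proof is correct, and although both arguments are inductions on $k$, the mechanism is genuinely different. The paper stays with unweighted point sets throughout: its base case $k=1$ counts the $\theta_{n-1}$ lines through a point off $\cB$ (which is your projection step in disguise), and for $k\ge2$ it splits according to whether $\cB$ also blocks every $(k-1)$-space --- if so, the $1$-fold case one level down gives $|\cB|\ge\theta_{n-k+1}=q\theta_{n-k}+1>t\theta_{n-k}$; if not, the $\theta_{n-k}$ $k$-spaces through a $(k-1)$-space disjoint from $\cB$ meet pairwise outside $\cB$ and each carries at least $t$ points, which yields the bound directly. You instead iterate the projection idea all the way down to $k=0$, which forces you to pass to the weighted statement (projection creates multiplicities), and your dichotomy is on whether $\cB$ misses a \emph{point} rather than a $(k-1)$-space. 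Your route buys uniformity (a single inductive step plus a trivial base case) and delivers, as a free by-product, the weighted version of the proposition --- which the paper asserts in the surrounding text and which is the form actually relevant later (e.g.\ for the cone construction); the paper's route never has to introduce weights and closes the non-degenerate branch by a one-line count instead of a recursive call. Both proofs spend the hypothesis $t\le q$ on the inequality $q\theta_{n-k}<\theta_{n-k+1}$, and both are sharp against the same example, a full $(n-k+1)$-space. The only cosmetic caveat is that your base case $k=0$ lies outside the range $1\le k\le n-1$ used elsewhere in the paper; it is a legitimate and natural extension (every point carries weight at least $t$), but you should state explicitly that the induction is being run over this slightly enlarged range.
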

\begin{proof}
We prove by induction on $k$. If $k=1$, we may take a point $P\notin\cB$ (otherwise $|\cB|\geq \theta_n > q\theta_{n-1}$ and there is nothing to prove). There are $\theta_{n-1}$ lines through $P$, each containing at least $t$ points of $\cB$, whence $|\cB|\geq t\theta_{n-1}$. Suppose now $k\geq 2$. If $\cB$ is an $(n-k+1)$-blocking set then, by induction, $|\cB|\geq \theta_{n-k+1}=q\theta_{n-k}+1 > t\theta_{n-k}$ and we are done. If there is a $(k-1)$-space $\Pi$ disjoint from $\cB$, then each of the $\theta_{n-k}$ distinct $k$-spaces containing $\Pi$ intersects $\cB$ in at least $t$ points, whence $|\cB|\geq t\theta_{n-k}$.
\end{proof}

Note that $t\leq q$ is necessary here, as if $\cB$ contains each point of an $(n-k+1)$-space with weight one, then $\cB$ is a $(q+1)$-fold $(n-k)$-blocking set of size $\theta_{n-k+1}=q\theta_{n-k} + 1 < (q+1)\theta_{n-k}$; moreover, adding $s$ further $(n-k)$-spaces to $\cB$ we obtain a weighted $(q+1+s)$-fold $(n-k)$-blocking set of size less than $(q+1+s)\theta_{n-k}$ for any $s\geq 0$.

A stability result for weighted $t$-fold $(n-k)$-blocking sets of size close to this lower bound was proven by Klein and Metsch \cite[Theorem 11]{KM}.

\begin{result}[Klein, Metsch \cite{KM}]\label{KM}
Let $\cB$ be a weighted $t$-fold $(n-k)$-blocking set in $\PG(n,q)$. Suppose that $|\cB|\leq t\theta_{n-k} + r\theta_{n-k-2}$, where $t$ and $r$ satisfy the following:
\begin{enumerate}
\item[a)] {$1\leq t\leq \frac{q+1}{2}$;}
\item[b)] $t+r\leq q$, $r\geq 0$ is an integer;
\item[c)] any blocking set of $\PG(2,q)$ of size at most $q+t$ contains a line.
\end{enumerate}
Then $\cB$ contains the (weighted) union of $t$ not necessarily distinct $(n-k)$-spaces.
\end{result}

Let us remark that for $k=1$ (that is, when $\cB$ is a $t$-fold weighted blocking set with respect to lines), \cite[Theorem 7]{KM} shows that condition \emph{c)} can be omitted in the above result. However, a blocking set of $\PG(2,q)$ not containing a line must contain at least $q+\sqrt{q}+1$ points in general (see \cite{Bruen} by Bruen), and, according to the following result of Blokhuis, at least $\frac32(q+1)$ if $q$ is prime, hence condition \emph{c)} holds accordingly.

\begin{result}[Blokhuis \cite{Husi}]\label{Husi}
Suppose that $\cB$ is a blocking set in $\PG(2,p)$, $p$ prime, not containing a line. Then $|\cB|\geq \frac32(p+1)$.
\end{result}

The following two theorems will be very useful for us.

\begin{result}[Harrach \cite{Harrach}]\label{Harrach}
Suppose that a weighted $t$-fold $k$-blocking set $\cB$ in $\PG(n,q)$ has less than $(t+1)q^k+\theta_{k-1}$ points. Then $\cB$ contains a unique minimal weighted $t$-fold $k$-blocking set $\cB'$.
\end{result}

A theorem of the below type is often called a $t \pmod p$ result.

\begin{result}[Ferret, Storme, Sziklai, Weiner \cite{FSSzW}]\label{FSSzW}
Let $\cB$ be a minimal weighted $t$-fold $(n-k)$-blocking set of $\PG(n,q)$, $q=p^h$, $p$ prime, $h\geq1$, of size $|\cB|=tq^{n-k}+t+k'$, with $t+k'\leq\frac{q^{n-k}-1}{2}$. Then $\cB$ intersects every $k$-space in $t \pmod p$ points \cite[Theorem 4.2]{FSSzW}. Moreover if $e\ge1$ denotes the largest integer for which each $k$-space intersects $\cB$ in $t \pmod{p^e}$ points, then $|\cB| > tq^{n-k} + \frac{q^{n-k}}{p^e+1} - 1$ \cite[Corollary 5.2]{FSSzW}.
\end{result}

Finally, we recall that the number of $(k+1)$-spaces containing a fixed $k$-space in $\PG(n,q)$ is $\theta_{n-k-1}$. This can be seen easily by taking an $(n-k-1)$-space disjoint from the fixed $k$-space and observing that each appropriate $(k+1)$-space intersects it in a unique point.

\subsection{Proof of Theorem \ref{blsetthm}}

We prove a theorem closely related to Theorem \ref{blsetthm} by considering an analogous problem in a slightly more general setting.

\begin{definition}\label{tmodpset}
Let us call a weighted point set $\cB$ in $\PG(n,q)$ \emph{a $t \pmod p$ set} with respect to the $k$-dimensional subspaces if $\cB$ intersects every $k$-space ($1\leq k\leq n-1$) in $t \pmod p$ points (counted with weights).
\end{definition}

Clearly, $t \pmod p$ sets are $t$-fold blocking sets if $t<p$ and, by Result \ref{FSSzW}, small minimal $t$-fold blocking sets are $t \pmod p$ sets.

\begin{theorem}\label{tmodpsetthm}
Let $\cB$ be a $t \pmod p$ set with respect to the $k$-dimensional subspaces in $\PG(n,p)$, $p$ prime. Suppose that $t\leq \frac38p + 1$ and $|\cB|\leq (t+1)\theta_{n-k} + p - 2$. Then $|\cB|= t\theta_{n-k}$ and $\cB$ consists of the weighted union of $t$ not necessarily distinct $(n-k)$-spaces.
\end{theorem}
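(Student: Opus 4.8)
The plan is to prove Theorem~\ref{tmodpsetthm} by induction on $k$, reducing the statement about $k$-dimensional subspaces in $\PG(n,p)$ to the planar case via repeated projection/intersection arguments, and then invoking the structural results already collected in the preliminaries. The base case $k=1$ (so that $\cB$ is a $t\pmod p$ set with respect to lines) is where the real geometric content lies: here $\cB$ is a $t$-fold blocking set with respect to hyperplanes, that is, a $t$-fold $(n-1)$-blocking set. I would first verify the size bound $|\cB|=t\theta_{n-1}=t\,q^{n-1}+\cdots$ matches the hypotheses of Result~\ref{FSSzW} and Result~\ref{KM}, so that the Klein--Metsch machinery forces $\cB$ to contain $t$ not necessarily distinct hyperplanes. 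The hypothesis $t\le\frac38p+1$ is exactly what is needed to control the combinatorics in the planar reduction (see below); I would track where this constant enters.

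Concretely, for the base case I would fix a line $\ell$ and look at the $p+1$ hyperplanes through some fixed $(n-2)$-space $\Sigma$, counting weights of $\cB$ across this pencil. Since every hyperplane meets $\cB$ in $t\pmod p$ points, a double-counting over such a pencil yields that the induced weight on $\Sigma$ is also $t\pmod p$, which lets me pass to a quotient geometry. Iterating this, I reduce to a $t\pmod p$ set with respect to lines in $\PG(2,p)$ of small size, where the classical results apply: by Result~\ref{Husi} a line-free blocking set of $\PG(2,p)$ has at least $\frac32(p+1)$ points, so under the size constraint $|\cB|\le (t+1)\theta_{n-k}+p-2$ each ``layer'' must actually contain a line. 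The condition $t\le\frac38p+1$ guarantees that the accumulated excess over $t$ full lines stays below the Blokhuis threshold, so no line-free blocking configuration can hide inside $\cB$. Combining these forces $\cB$ to be exactly a weighted union of $t$ lines, and back-substituting gives $|\cB|=t\theta_{n-k}$ with $\cB$ a union of $t$ subspaces.

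For the inductive step $k\ge 2$, I would project $\cB$ from a suitable point $P$ (or intersect with a hyperplane) to obtain a $t\pmod p$ set with respect to $(k-1)$-dimensional subspaces in $\PG(n-1,p)$, whose size still satisfies the hypothesis $|\cB'|\le (t+1)\theta_{(n-1)-(k-1)}+p-2=(t+1)\theta_{n-k}+p-2$. The key is to choose the projection center so that the $t\pmod p$ property is preserved: every $k$-space through $P$ projects to a $(k-1)$-space, and since $k$-spaces meet $\cB$ in $t\pmod p$ points while the contribution of $P$ itself is fixed, the projected weights again lie in the right residue class. The induction hypothesis then identifies the projection as a weighted union of $t$ subspaces, and I would lift this back to $\PG(n,p)$, checking that the preimages assemble into $t$ genuine $(n-k)$-spaces rather than cones or other degenerate configurations; the minimality/uniqueness afforded by Result~\ref{Harrach} is the tool that rules out the degenerate lifts.

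\textbf{The main obstacle} I anticipate is the lifting step in the induction, not the base case: even once the projection is known to be a union of $t$ subspaces, recovering the full structure of $\cB$ requires showing that the fibers over each projected subspace are themselves flat (i.e.\ that $\cB$ does not merely \emph{project} to a union of subspaces but genuinely \emph{is} one). This is precisely where the sharp constant $\frac38p+1$ must do its work, by keeping the total weight tight enough that any ``spreading'' of weight across fibers would either violate the size bound or destroy the $t\pmod p$ condition on some transversal $k$-space. I would handle this by a careful weighted count on the fibers, using that each fiber line meets the putative $t$ subspaces in a controlled number of points and that the residual weight off the subspaces must be a blocking configuration small enough to be excluded by Result~\ref{Husi}; making this fiberwise argument airtight, uniformly over all the $t$ subspaces simultaneously, is the delicate part of the proof.
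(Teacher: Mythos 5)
Your top-level skeleton (induction on $k$; projection from a point $P\notin\cB$ for the step $k\ge2$; Blokhuis and Klein--Metsch in the base case) matches the paper, but the proposal has a genuine gap at its core: you never actually prove that $|\cB|=t\theta_{n-k}$, and without that the structural results you want to invoke are unavailable. Result~\ref{KM} requires $|\cB|\le t\theta_{n-k}+r\theta_{n-k-2}$ with $t+r\le q$, so the excess over $t\theta_{n-k}$ it tolerates is less than $\theta_{n-k-1}$, whereas the hypothesis of the theorem allows an excess of a full $\theta_{n-k}+p-2$. When you write that you would ``first verify the size bound $|\cB|=t\theta_{n-1}$ matches the hypotheses of Result~\ref{FSSzW} and Result~\ref{KM}'', you are assuming the conclusion: the entire content of the base case $k=1$ is to rule out $t\theta_{n-1}<|\cB|\le(t+1)\theta_{n-1}+p-2$. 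The paper does this by a specific counting argument that your sketch does not contain: assuming $|\cB|>t\theta_{n-1}$, one takes a \emph{long} line $\ell$ of minimum weight $sp+t$ and a point $P\in\ell\setminus\cB$, and shows that every plane through $\ell$ carries a second long line through $P$, so that $|\cB|\ge t\theta_{n-1}+(\theta_{n-2}+1)sp\ge(t+1)\theta_{n-1}+p-1$, a contradiction. The planar analysis inside a fixed plane is where $\frac38p+1$ enters: a dual blocking set argument via Result~\ref{Husi} forces at least $\frac32(p+1)$ non-$t$-secant lines, a count around the points of a $t$-secant forces $s\ge2$ and more than $\frac12(p+1)$ full lines, and summing the weights on two long lines plus $\frac p2+1$ full lines yields $t>\frac38p+1$. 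Your heuristic that $\frac38p+1$ ``keeps the accumulated excess below the Blokhuis threshold'' does not reflect this mechanism and is not a proof.

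The second problem is that your inductive step manufactures a difficulty that the correct argument avoids. You plan to identify the \emph{projection} $\tb$ as a union of $t$ subspaces and then lift that structure back to $\PG(n,p)$, and you rightly observe that such a lift is delicate (cones over subspaces project to subspaces). The paper sidesteps this entirely: the projection from $P\notin\cB$ is used only to compute the \emph{size}. Since $|\tb|=|\cB|$ and, by induction, $|\tb|=t\theta_{n-k}$, one gets $|\cB|=t\theta_{n-k}$ on the nose; then Result~\ref{KM} (now with $r=0$, condition (c) supplied by Result~\ref{Husi}) applies directly to $\cB$ in the ambient space and yields the $t$ subspaces with no lifting required. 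As written, your proposal neither closes the size gap in the base case nor gives a workable route through the lifting step you flag as the main obstacle.
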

\begin{proof}
The proof will use induction on $k$. Clearly, $\cB$ is a $t$-fold $(n-k)$-blocking set. We will need the existence of a point not in $\cB$. This follows if $|\cB|<\theta_n$. If $p-1\geq t+1$, then our assumption gives $|\cB|\leq (p-1)\theta_{n-1}+p-2 = \theta_{n} - 1 - \theta_{n-1} + p - 2 < \theta_n$. If $p\leq t+1$, then from $t\leq \frac38p+1$ it follows that $p\leq 3$ must hold. If $p=2$, then $t=1$ and $|\cB|\leq (t+1)\theta_{n-1} + p - 2 = \theta_n - 1$. If $p=3$, the problematic case is $t=2$, when $|\cB|\leq 3\theta_{n-1} + 1 = \theta_n$. If $|\cB|=\theta_n$ and $\cB$ contains every point of the space, then it is clearly a $1 \pmod p$ set for every subspace, in contradiction with $t=2$. Hence we always find a point not contained in $\cB$.

\noindent\textbf{Case 1: $k=1$ (and $n\geq 2$).}
Notice first that every point of $\cB$ has weight at most $t$. Indeed, by taking the weights of the points modulo $p$, we may assume that no point has weight at least $p$; and if $t+1\leq w(P)\leq p-1$ for a point $P$, then all the $\theta_{n-1}$ lines through $P$ must contain at least $p+t-w(P)$ more weights, whence $|\cB|\geq w(P) + (p+t-w(P))\theta_{n-1}\geq p - 1 + (t+1)\theta_{n-1}$, a contradiction.

It follows from Results \ref{KM} and \ref{Husi} that the assertion holds if $|\cB|=t\theta_{n-1}$, hence we may assume that $|\cB|> t\theta_{n-1}$ and prove by contradiction.

We will call lines that are neither $t$-secants (to $\cB$), nor contained fully in $\cB$ \emph{long} lines; lines contained in $\cB$ will be referred to as \emph{full} lines. Non-$t$-secant lines are, therefore, either full or long. Long lines exist as on any point not in $\cB$ (an \emph{outer} point) we find a line intersecting $\cB$ in more than $t$ points, since $|\cB|=t\theta_{n-1}$ would follow otherwise. Suppose that the minimum weight of a long line is $sp+t$. Clearly, $1\leq s \leq t-1$ (the weight of a long line is at most $tp$). Let $\ell$ be a long line of weight $sp+t$, and let $P\in\ell\setminus\cB$. We want to show that for any $2$-space $\Pi$ containing $\ell$, there is a long line through $P$ in $\Pi$ different from $\ell$. Fix such a plane $\Pi$ (if $n=2$, then this is unique) and suppose to the contrary. Let $\cB'=\cB\cap\Pi$. Then, looking around from $P$ in $\Pi$, $|\cB'|= (p+1)t + sp$. Similarly as before, there must be a non-$t$-secant line on any point $R\in\Pi$; in other words, long and full lines form a blocking set in the dual plane of $\Pi$. It follows that long lines cover each outer point of $\Pi$ exactly once. Moreover, the number of non-$t$-secant lines must be at least $\frac32(p+1)$ for the following reason. By Blokhuis' Result \ref{Husi}, a blocking set of $\PG(2,p)$ of size less than $\frac32(p+1)$ contains a line. In our setting this situation would result in a point $Q$ through which all lines are either long or full. But then $(2t-1)p + t\geq tp+t+sp = |\cB'|\geq w(Q) + (p+1)(p+t-w(Q)) \geq t+(p+1)p$, a contradiction even under $t < \frac12p+1$.

Let $e$ be a $t$-secant to $\cB'$ (such a line exists as seen above). Let $P_1,\ldots,P_r$ be the mutually distinct points of $e\cap\cB'$, $1\leq r \leq t$. Let $h_1(P_i)$ and $h_2(P_i)$ denote the number of full and long lines on $P_i$, respectively, and let $h_1$ and $h_2$ be the total number of full and long lines, respectively; then $h_1+h_2 \geq \frac32(p+1)$. Looking around from $P_i$ we see that
\[
(p+1)t + sp = |\cB'|\geq w(P_i) + (p+1)(t-w(P_i)) + h_1(P_i)p + h_2(P_i)sp,
\]
whence $w(P_i)+s\geq h_1(P_i)+sh_2(P_i)$. Let $h_2':=h_2-(p+1-r)$ be the number of long lines intersecting $e$ in a point of $\cB'$. Then $h_1+h_2'\geq \frac32(p+1) - (p+1-r) \geq \frac12(p+1)+r$, and we obtain that
\[
t+rs = \sum_{i=1}^r (w(P_i) + s) \geq  \sum_{i=1}^r (h_1(P_i) + sh_2(P_i)) = h_1 + sh_2' =  h_1+s(h_2-(p+1-r)),
\]
whence
\begin{eqnarray*}
t &\!\geq\!& h_1 + s(h_2 -(p+1)) > h_1 + s\left(\left(\frac{3(p+1)}{2}-h_1\right) - (p+1)\right)
= h_1 + s\left(\frac{p+1}{2}-h_1\right)\\
&\!=\!& (s-1)\left(\frac{p+1}{2} -h_1 \right) +\frac{p+1}{2}.
\end{eqnarray*}
As $t < \frac12(p+1)$, it follows that $s\geq 2$ and $h_1 > \frac12(p+1)$.

It is clear that there are at least $p+1-r\geq 2$ long lines. Take now two long lines and the $h_1\geq \frac12p+1$ full lines one by one. The first line contains at least $sp+t$ weights of $\cB'$. The second line may intersect it in a point of weight at most $t$, hence we see at least $sp$ more weights on it. Turning to the full lines, the $i$th full line contains at least $p+1 - 2 - (i-1) = p - i$ points of $\cB'$ not contained by any of the previous lines. Altogether we obtain
\begin{eqnarray*}
(p+1)t + sp = |\cB'| &\geq& 2sp + t  + \sum_{i=1}^{\frac{p}{2}+1}(p-i) = 2sp + t + \left(\frac{p}{2}+1\right)p - \binom{\frac{p}{2}+2}{2} \\
 &=& 2sp + t + \frac{p^2}{2} + p - \frac{p^2}{8}-\frac{3p}{4} -1,
\end{eqnarray*}
whence
\[
t\geq \frac38p + s + \frac14 -\frac1p > \frac38p + 1,
\]
a contradiction. Thus we see that all planes containing $\ell$ indeed contain at least one other long line through $P$, so we find at least $1+\theta_{n-2}$ long lines through $P$, hence on all the $\theta_{n-1}$ lines through $P$ we find that $|\cB|\geq t\theta_{n-1} + (\theta_{n-2}+1)sp = t\theta_{n-1} + s(\theta_{n-1}-1) + sp \geq (t+1)\theta_{n-1} + p -1$, a contradiction.

\textbf{Case 2: $2\leq k \leq n-1$ (and $n\geq 3$).}
Take a point $P \notin \cB$ in $\PG(n,p)$. Project the points of $\cB$ from $P$ into an arbitrary hyperplane $H$. We get a weighted point set $\tb \subseteq \Pi$ for which $|\tb| = |\cB|$. Let $W$ be a $(k-1)$-space in $H$, and let $U = \langle P, W\rangle$ be the $k$-space spanned by $P$ and $W$. Then $|W\cap \tb| = |U \cap \cB|$, hence $\tb$ is a $t \pmod p$ set with respect to $(k-1)$-spaces in the $(n-1)$-space $H$ thus, by induction on $k$, $|\cB| = |\tb| = t\theta_{n-1 - (k-1)}=t\theta_{n-k}$. Results \ref{KM} and \ref{Husi} finish the proof.
\end{proof}

Theorem \ref{blsetthm} now follows from Theorem \ref{tmodpsetthm} and the $t\pmod p$ Result \ref{FSSzW}.

\section{On the upper chromatic number of $\cH(n,n-k,q)$}

\subsection{Proof of Theorems \ref{ucnthm} and \ref{ucnstabp}}

The steps of the proof have a lot in common with those in \cite{BHSz}. We recall that we want to color the points of $\PG(n,q)$ with as many colors as possible so that each $(n-k)$-space contains two equicolored points. For two points $P$ and $Q$, $PQ$ denotes the line joining them.

\begin{definition}
Let $[m]_q=\frac{q^{m}-1}{q-1}=q^{m-1}+q^{m-2}+\ldots+q+1$. Let $[m]_q!=\prod_{i=1}^m[i]_q$, where $[0]_q!=1$, and let $\qbinom{n}{m}=\frac{[n]_q!}{[m]_q![n-m]_q!}=\frac{(q^n-1)(q^{n-1}-1)\ldots(q^{n-m+1}-1)}{(q^m-1)(q^{m-1}-1)\ldots(q-1)}$ denote the number of $m$ dimensional subspaces of an $n$ dimensional \emph{vector space}; thus the number of $m$-spaces in $\PG(n,q)$ is $\qbinom{n+1}{m+1}$.
\end{definition}

Note that $\qbinom{k+1}{1}=\theta_k$. Let us collect some facts regarding the above defined $q$-binomial coefficients.

\begin{lemma}\label{lemmata}
Let $q\geq2$, $n\geq1$, $s\geq 0$. Then
\begin{itemize}
\item[a)]{$\qbinom{n}{1}=q^{n-1}+\qbinom{n-1}{1}$, that is, $\theta_n=q^{n-1}+\theta_{n-1}$;}
\item[b)]{the number of $m$-spaces containing a given $k$-space in $\GF(q)^n$ is $\qbinom{n-k}{m-k}$;}
\item[c)]{$\theta_s=\qbinom{s+1}{1}<\left (1+\frac{1}{q-1} \right )q^s\leq \left (1+\frac{2}{q} \right )q^s$;}
\item[d)]{if $s\leq n-1$, then $\frac{\qbinom{n+1}{n-s+1}}{\qbinom{n-1}{s}}\geq q^{2s}$.}
\end{itemize}
\end{lemma}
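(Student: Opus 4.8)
The plan is to verify each of the four items directly from the closed form
\[
\qbinom{n}{m}=\frac{(q^n-1)(q^{n-1}-1)\cdots(q^{n-m+1}-1)}{(q^m-1)(q^{m-1}-1)\cdots(q-1)},
\]
since all four statements are elementary identities or estimates for Gaussian binomials, and only b) really benefits from a geometric argument. For a), I would use $\qbinom{n}{1}=\frac{q^n-1}{q-1}$ and compute $\frac{q^n-1}{q-1}-\frac{q^{n-1}-1}{q-1}=\frac{q^n-q^{n-1}}{q-1}=q^{n-1}$, which is exactly the claimed identity; the $\theta$-reformulation then follows from the relation $\theta_s=\qbinom{s+1}{1}$. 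For c), write $\theta_s=\frac{q^{s+1}-1}{q-1}<\frac{q^{s+1}}{q-1}=q^s\cdot\frac{q}{q-1}=\left(1+\frac{1}{q-1}\right)q^s$, the inequality being strict because we dropped the $-1$ in the numerator; the final bound holds since $\frac{1}{q-1}\le\frac{2}{q}$ is equivalent to $q\le 2(q-1)$, i.e.\ to $q\ge 2$, which is assumed.

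For b), I would argue geometrically rather than by manipulating the formula. Fix a $k$-dimensional subspace $W\subseteq\GF(q)^n$ and pass to the quotient $\GF(q)^n/W$, which has dimension $n-k$. The $m$-dimensional subspaces $U$ with $W\subseteq U$ correspond bijectively to the $(m-k)$-dimensional subspaces $U/W$ of the quotient, and by the very definition of the Gaussian binomial coefficient the number of the latter is $\qbinom{n-k}{m-k}$, as desired.

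The only item requiring a genuine (but short) calculation is d). Here I would first invoke the symmetry $\qbinom{n+1}{n-s+1}=\qbinom{n+1}{s}$, so the ratio becomes $\frac{\qbinom{n+1}{s}}{\qbinom{n-1}{s}}$. After the common denominator factor $(q^s-1)\cdots(q-1)$ cancels, this equals $\frac{\prod_{j=n-s+2}^{n+1}(q^j-1)}{\prod_{j=n-s}^{n-1}(q^j-1)}$. Both products have exactly $s$ factors, and pairing the $i$-th factors ($i=1,\ldots,s$, ordered by increasing exponent) regroups the ratio as $\prod_{i=1}^{s}\frac{q^{\,n-s+1+i}-1}{q^{\,n-s-1+i}-1}$, a product of $s$ terms each of the shape $\frac{q^{a+2}-1}{q^a-1}$ with $a=n-s-1+i\ge n-s\ge 1$ (using $s\le n-1$). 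Since $\frac{q^{a+2}-1}{q^a-1}\ge q^2$ is equivalent to $q^2\ge 1$, every factor is at least $q^2$, whence the product is at least $q^{2s}$.

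I do not expect any real obstacle: the whole lemma is bookkeeping. The one place that needs care is the index arithmetic in d) --- confirming that the reduced ratio is a clean product of $s$ terms of the form $\frac{q^{a+2}-1}{q^a-1}$ and that the smallest exponent $a=n-s$ stays positive, which is precisely what the hypothesis $s\le n-1$ guarantees.
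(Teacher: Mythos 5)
Your proof is correct. Items a), b), c) are handled exactly as in the paper: a) and c) are one-line manipulations of $\frac{q^n-1}{q-1}$ (the paper writes c) slightly differently, as $\theta_s=q^s+q^{s-1}+\frac{q^{s-1}-1}{q-1}<q^s+\bigl(1+\frac{1}{q-1}\bigr)q^{s-1}$, but the content is identical), and b) is the same quotient-space bijection $U\mapsto U/W$ onto the $(m-k)$-subspaces of the $(n-k)$-dimensional space $\GF(q)^n/W$. The only real divergence is in d). The paper cancels the $q$-factorials directly to get the two-factor form $\frac{(q^{n+1}-1)(q^n-1)}{(q^{n-s+1}-1)(q^{n-s}-1)}$ and then pushes it below by a somewhat delicate chain of inequalities ending in $q^{2s}-2q^{-n}+q^{s-n}\geq q^{2s}$. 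You instead invoke the symmetry $\qbinom{n+1}{n-s+1}=\qbinom{n+1}{s}$ and write the ratio as a product of $s$ factors $\frac{q^{a+2}-1}{q^a-1}$ with $a=n-s-1+i\geq n-s\geq1$, each of which is at least $q^2$ because that reduces to $q^2\geq1$. The two decompositions describe the same rational function (your product telescopes to the paper's two-factor quotient), but your factorization makes the bound transparent term by term and avoids the paper's approximation steps entirely; it is the cleaner argument, and the index bookkeeping you flag (that the smallest denominator exponent is $n-s\geq1$, guaranteed by $s\leq n-1$) is indeed the only point that needs checking.
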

\begin{proof}
The first statement is trivial. As for the second one, let $U$ be the given $k$-space. The quantity in question is just the number of $(m-k)$-spaces in the $(n-k)$-dimensional quotient space $\GF(q)^{n}/U$. The third assertion is trivial for $s=0$; otherwise $\theta_s=q^s+q^{s-1}+\frac{q^{s-1}-1}{q-1}<q^s+(1+\frac{1}{q-1})q^{s-1}$. Finally, regarding the fourth: it is trivial if $s=0$; for $s\geq 1$,
\[\frac{\qbinom{n+1}{n-s+1}}{\qbinom{n-1}{s}}=\frac{[n+1]_q![s]_q![n-1-s]_q!}{[n-s+1]_q![s]_q![n-1]_q!}=\frac{(q^{n+1}-1)(q^n-1)}{(q^{n-s+1}-1)(q^{n-s}-1)}\]
\[>\frac{q^{2n+1}-2q^{n+1}}{q^{2n-2s+1}-q^{n-s+1}}=\frac{q^{n+s}-2q^{-s}}{q^{n-s}-1}> \frac{q^{n+s}-2q^{-s}+1}{q^{n-s}}\]
\[=q^{2s}-2q^{-n}+q^{s}q^{-n}\geq q^{2s},\]
as $s\geq 1$ and $q\geq2$.
\end{proof}

\textbf{General notation and assumptions.} Suppose that a strict proper coloring $\cC$ of $\cH(n,n-k,q)$ using $N$ colors is given. We denote the color classes of $\cC$ by $C_1,\ldots,C_N$. For the sake of simplicity, we will compare $N$ with  $\theta_n-2\theta_k+1$, hence we define the \emph{deficit} $d=d(\cC)$ of $\cC$ by $N= \theta_n-2\theta_k+1-d$ which, in principle, may be negative as well. Note that the number $\delta$ in Theorems \ref{ucnstabp} and \ref{ucnstabq} is an upper bound on $d$. Without loss of generality we may assume that $C_1,\ldots,C_m$ are precisely the color classes of size at least two for some $m\geq1$. Let $\cB=\cB(\cC)=\cup_{i=1}^{m}C_i$.

\begin{definition}
We say that a color class $C$ \emph{colors the $(n-k)$-space $U$} if $|C\cap U|\geq 2$.
\end{definition}

As every $(n-k)$-space must be colored by at least one of the color classes among $C_1,\ldots,C_m$, we clearly see that $\cB$ is a $2$-fold $k$-blocking set.

\begin{prop}\label{trivprop}\mbox{}
\vspace{-3mm}
\begin{itemize}
\item[a)]{$m= |\cB|-2\theta_k-d+1$}
\item[b)]{$m\leq 2\theta_k+d-1$}
\item[c)]{$|\cB|\leq 4\theta_k+2(d-1)$}
\end{itemize}
\end{prop}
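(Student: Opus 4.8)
The plan is to prove all three parts by a single double-counting of the $\theta_n$ points of $\PG(n,q)$ according to the color classes, keeping careful track of the definitions of $m$, $d$, and $\cB$. The starting observation is that the $N$ color classes split into two groups: the $m$ classes $C_1,\dots,C_m$ of size at least two, whose union is $\cB$ by definition, and the remaining $N-m$ singleton classes. Since the color classes partition $V$, the points lying outside $\cB$ are exactly the singletons, so their number is $N-m$. Counting all the points of the space in this way yields the single identity
\[
\theta_n = |\cB| + (N-m).
\]

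From here part a) is immediate. Substituting the defining relation $N=\theta_n-2\theta_k+1-d$ into the displayed identity and solving for $m$ gives
\[
m = |\cB| + N - \theta_n = |\cB| - 2\theta_k - d + 1,
\]
which is exactly a).

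For part b) I would invoke the only structural fact available, namely that each of the $m$ classes contributing to $\cB$ has size at least two, so $|\cB|=\sum_{i=1}^m |C_i|\geq 2m$. Rewriting a) as $|\cB| = m + 2\theta_k + d - 1$ and comparing with $|\cB|\geq 2m$ gives $2m \leq m + 2\theta_k + d - 1$, that is, $m\leq 2\theta_k+d-1$, which is b). Finally, part c) follows by feeding b) back into a): from $|\cB| = m + 2\theta_k + d - 1$ and $m\leq 2\theta_k+d-1$ we obtain $|\cB|\leq 4\theta_k + 2(d-1)$.

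There is no genuine obstacle here; the argument is pure bookkeeping, and I expect the entire proposition to reduce to the one point-count above together with the elementary inequality $|\cB|\geq 2m$. The only thing to be careful about is the definitional setup — that the points outside $\cB$ are precisely the singleton color classes (so that $N-m$ counts them with neither over- nor undercounting) and that the sign convention in $N=\theta_n-2\theta_k+1-d$ is respected throughout — so the real risk is an off-by-one slip rather than any conceptual difficulty.
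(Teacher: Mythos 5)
Your proof is correct and follows exactly the paper's argument: the identity $\theta_n-|\cB|+m=N$ combined with $N=\theta_n-2\theta_k+1-d$ gives a), and the inequality $|\cB|\geq 2m$ then yields b) and c) just as in the paper. No issues.
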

\begin{proof}
The first assertion follows from $\theta_n-|\cB|+m=N = \theta_n-2\theta_k+1-d$. As $|C_i|\geq2$ for all $1\leq i\leq m$, $|\cB|\geq 2m$. This and the previous equality imply $m\leq 2\theta_k+d-1$ and $|\cB|\leq 4\theta_k+2(d-1)$.
\end{proof}

\begin{lemma}\label{maxcolored}
A color class $C$ colors at most $\binom{|C|}{2}\qbinom{n-1}{k}$ distinct $(n-k)$-spaces.
\end{lemma}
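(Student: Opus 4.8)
The plan is to bound the number of colored $(n-k)$-spaces by a double-counting argument over the pairs of points lying in $C$. By definition, a color class $C$ colors an $(n-k)$-space $U$ precisely when $|C\cap U|\geq 2$, that is, when $U$ contains at least two points of $C$. Any such $U$ therefore contains at least one pair $\{P,Q\}\subseteq C$, and hence contains the line $PQ$ spanned by such a pair. So the first step is to assign to each colored $(n-k)$-space $U$ one line $PQ$ determined by a pair of $C$-points it contains; this gives a map from the set of colored $(n-k)$-spaces into the set of lines arising from pairs of $C$.

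The second step is to count, for a fixed line $\ell$, how many $(n-k)$-spaces contain it. Passing to the underlying vector space $\GF(q)^{n+1}$, an $(n-k)$-space of $\PG(n,q)$ corresponds to an $(n-k+1)$-dimensional subspace, and the line $\ell$ to a $2$-dimensional subspace. Applying Lemma~\ref{lemmata}~b) with ambient dimension $n+1$, fixed dimension $2$, and target dimension $n-k+1$, the number of such subspaces is $\qbinom{(n+1)-2}{(n-k+1)-2}=\qbinom{n-1}{n-k-1}=\qbinom{n-1}{k}$, where the last equality is the symmetry $\qbinom{n-1}{j}=\qbinom{n-1}{(n-1)-j}$ of the Gaussian binomial coefficient. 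Thus every line lies in exactly $\qbinom{n-1}{k}$ distinct $(n-k)$-spaces.

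The final step combines the two. There are exactly $\binom{|C|}{2}$ pairs of points in $C$, and each determines a unique line. Counting incidences between these lines and the $(n-k)$-spaces that contain them yields at most $\binom{|C|}{2}\qbinom{n-1}{k}$ incidences in total. Since each colored $(n-k)$-space contributes at least one such incidence (via the pair assigned to it in the first step), the number of colored $(n-k)$-spaces is bounded above by this total, giving the claimed inequality.

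I expect no serious obstacle here; the argument is elementary. The only point requiring a little care is that an $(n-k)$-space may contain three or more points of $C$ and so be counted several times in the incidence sum, but since we only need an \emph{upper} bound this over-counting is harmless—indeed it only strengthens the inequality. The single genuine computation is the count of $(n-k)$-spaces through a fixed line, which reduces to a direct application of Lemma~\ref{lemmata}~b) together with the symmetry of the $q$-binomial coefficients.
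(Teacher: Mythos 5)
Your argument is correct and is essentially the same as the paper's: both assign to each colored $(n-k)$-space a line spanned by a pair of points of $C$, bound the number of such lines by $\binom{|C|}{2}$, and count the $(n-k)$-spaces through a fixed line via Lemma~\ref{lemmata}~b) and the symmetry $\qbinom{n-1}{n-k-1}=\qbinom{n-1}{k}$. No issues.
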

\begin{proof}
If $C$ colors an $(n-k)$-space $U$, then $U$ contains a line spanned by the points of $C$. The number of such lines is at most $\binom{|C|}{2}$. By Lemma \ref{lemmata}, the number of $(n-k)$ spaces containing a given line is $\qbinom{n-1}{n-k-1}=\qbinom{n-1}{k}$.
\end{proof}

The next proposition says that $\cB$ cannot be too large; roughly speaking, $|\cB|\leq (4-\sqrt{2})q^k+2d+o(q^k)$.

\begin{prop}\label{big}
Suppose that $d\leq \alpha q^k$, and $q > \left(\frac{5}{\sqrt{2}}-2-\alpha-\frac{4}{q}\right)^{-1}>0$. Then $|\cB|<(4-\sqrt{2})q^k+4\theta_{k-1}+2d+2$.
\end{prop}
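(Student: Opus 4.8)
The plan is to combine a double-counting (covering) inequality with a convexity bound on the color-class sizes and then compare leading coefficients. First I would set up the covering inequality. Every $(n-k)$-space is colored, and only the classes $C_1,\dots,C_m$ of size at least two can color a space; by Lemma~\ref{maxcolored} the class $C_i$ colors at most $\binom{|C_i|}{2}\qbinom{n-1}{k}$ of them. Since each $(n-k)$-space is counted at least once when we sum these bounds over all classes, we obtain
\[
\qbinom{n-1}{k}\sum_{i=1}^m\binom{|C_i|}{2}\ \geq\ \qbinom{n+1}{n-k+1},
\]
and Lemma~\ref{lemmata}d) (with $s=k\le n-1$) turns this into $\sum_{i=1}^m\binom{|C_i|}{2}\geq q^{2k}$.

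Next I would bound $\sum_i\binom{|C_i|}{2}$ from above. Since $\binom{x}{2}$ is convex, among all partitions of the fixed total $|\cB|$ into $m$ parts of size at least two the sum $\sum_i\binom{|C_i|}{2}$ is maximized by the majorizing partition: one class of size $L=|\cB|-2(m-1)$ and $m-1$ classes of size $2$. Hence $\sum_i\binom{|C_i|}{2}\leq\binom{L}{2}+(m-1)$. Using Proposition~\ref{trivprop}a) to eliminate $m$ gives the clean identity $L=4\theta_k+2d-|\cB|$, so that a large $\cB$ forces a small $L$ and therefore a small right-hand side.

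Finally I would argue by contradiction. Assuming $|\cB|\geq(4-\sqrt2)q^k+4\theta_{k-1}+2d+2$ and using $\theta_k-\theta_{k-1}=q^k$ (Lemma~\ref{lemmata}a), the identity above yields $L\leq\sqrt2\,q^k-2$, whence $\binom{L}{2}\leq q^{2k}-\frac{5}{\sqrt2}q^k+3$. Bounding $m-1=|\cB|-2\theta_k-d$ by means of Proposition~\ref{trivprop}c), $\theta_k<(1+\tfrac2q)q^k$ (Lemma~\ref{lemmata}c) and $d\leq\alpha q^k$ gives $m-1<(2+\tfrac4q+\alpha)q^k-2$, so that
\[
\binom{L}{2}+(m-1)<q^{2k}+\Bigl(2+\tfrac4q+\alpha-\tfrac{5}{\sqrt2}\Bigr)q^k+1.
\]
The hypothesis $q>\bigl(\tfrac{5}{\sqrt2}-2-\alpha-\tfrac4q\bigr)^{-1}>0$ says exactly that $C:=\tfrac{5}{\sqrt2}-2-\alpha-\tfrac4q>0$ and $Cq>1$; since $q^k\geq q$ this forces $Cq^k>1$, hence $\binom{L}{2}+(m-1)<q^{2k}$, contradicting the covering bound.

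The main obstacle I expect is that both estimates are of the same order: both sides of the covering inequality are essentially $q^{2k}\qbinom{n-1}{k}$, so the entire argument lives in the lower-order $q^k$-terms. Consequently the extremal (majorization) step and the bookkeeping of the constants $-\tfrac{5}{\sqrt2}$, $2$, $\alpha$ and $\tfrac4q$ must be carried out exactly rather than asymptotically, and it is precisely the balance of these constants that the peculiar-looking hypothesis on $q$ encodes.
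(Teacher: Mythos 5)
Your proposal is correct and follows essentially the same route as the paper's own proof: the same covering inequality via Lemma~\ref{maxcolored}, the same convexity/majorization step giving $\binom{|\cB|-2(m-1)}{2}+(m-1)$, the same substitution $L=4\theta_k+2d-|\cB|$ from Proposition~\ref{trivprop}, and the same final comparison of the $q^k$-coefficients against the hypothesis on $q$. The only cosmetic difference is that you derive the bound $m-1\leq 2\theta_k+d-2$ from Proposition~\ref{trivprop}c) rather than citing part b) directly, which is equivalent.
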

\begin{proof}
As every $(n-k)$-space must be colored, by Lemma \ref{maxcolored} and convexity we have
\[\dqbinom{n+1}{n-k+1}\leq \sum_{i=1}^{m}\binom{|C_i|}{2}\dqbinom{n-1}{k}\leq \left(\binom{|\cB|-2(m-1)}{2}+(m-1)\right)\dqbinom{n-1}{k}.\]
By Lemma \ref{lemmata} $d)$ and Proposition \ref{trivprop} $a)$, $b)$
\[q^{2k}\leq \binom{|\cB|-2(m-1)}{2}+(m-1)\leq \binom{4\theta_k-|\cB|+2d}{2}+2\theta_k+d-2.\]
Suppose to the contrary that $|\cB|\geq 4\theta_k-\sqrt{2}q^k+2d+2=(4-\sqrt{2})q^k+4\theta_{k-1}+2d+2$ (here we use Lemma \ref{lemmata} $a)$). Then by the assumption and Lemma \ref{lemmata} $c)$, the right-hand-side of the above expression is at most
\[\binom{\sqrt{2}q^k-2}{2}+2\theta_k+d-2 < q^{2k}-\left(\frac{5}{\sqrt{2}} - 2\left(1+\frac{2}{q}\right) - \alpha\right) q^k+1<q^{2k},\]
a contradiction.
\end{proof}

The following lemma will be very useful as it provides us large color classes if $\cB$ is not large. The proof is based on Result \ref{Harrach}. Right now, we do not need the following stronger version of this lemma since our blocking set has no weights, but respecting its future use we will state it in a more general setting. This version can deal with colorings which come from weighted blocking sets.

\begin{lemma}\label{esspt}
Suppose that a color class $C$ contains a simple essential point $P$ of $\cB$. Then there exists a set of simple points $S\subset C\setminus\{P\}$ such that $|S|\geq3q^k+\theta_{k-1}-|\cB|$, and for any point $Q\in S$ there exists an $(n-k)$ space $U$ such that $U\cap\cB=\{P;Q\}$ (so these points are essential for $\cB$). In particular, $|C|\geq 3q^k+\theta_{k-1}+1-|\cB|$.
\end{lemma}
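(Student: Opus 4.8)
The plan is to pass to the quotient geometry at $P$ and turn the statement into a lower bound on the number of essential points of a simple $1$-fold blocking set. If $|\cB|\ge 3q^k+\theta_{k-1}$ the claim is vacuous (take $S=\emptyset$), so I assume $|\cB|<3q^k+\theta_{k-1}$. Projecting $\cB$ from the simple essential point $P$ onto a hyperplane yields a weighted set $\bar\cB$ in $\PG(n-1,q)$ of weighted size $|\bar\cB|=|\cB|-w(P)=|\cB|-1$; since the $(n-k)$-spaces through $P$ are exactly the preimages of the $(n-k-1)$-spaces downstairs, $\bar\cB$ is a weighted $1$-fold $k$-blocking set of $\PG(n-1,q)$. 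The key dictionary is: an $(n-k)$-space $U\ni P$ satisfies $U\cap\cB=\{P,Q\}$ with $Q$ simple if and only if its image $\bar U$ is a tangent $(n-k-1)$-space of $\bar\cB$ touching it in the single simple point $\bar Q$ (the image of $Q$). Hence the partners $Q$ sought are precisely the simple essential points of $\bar\cB$, and because each such $U$ must be non-rainbow while meeting $\cB$ only in $P$ and $Q$, every partner is forced to lie in $C$. It therefore suffices to show that $\bar\cB$ has at least $3q^k+\theta_{k-1}-|\cB|$ simple essential points.

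For this count I would invoke Result~\ref{Harrach}, which explains the threshold $3q^k+\theta_{k-1}=(2+1)q^k+\theta_{k-1}$: applied to the $2$-fold $k$-blocking set $\cB$ itself (legitimate exactly because $|\cB|<3q^k+\theta_{k-1}$), it yields a unique minimal $2$-fold $k$-blocking set $\cB'\subseteq\cB$. Uniqueness forces the essential points of $\cB$ to be precisely those of $\cB'$, so $P\in\cB'$, and Proposition~\ref{folklore} gives $|\cB'|\ge 2\theta_k$; consequently the surplus satisfies $|\cB|-|\cB'|<(3q^k+\theta_{k-1})-2\theta_k=q^k-\theta_{k-1}$. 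Thus $\cB$ arises from a minimal (and hence highly structured) double blocking set by adding fewer than $q^k$ points, and the idea is to transport this information through the projection to control the simple essential points of $\bar\cB$.

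The main obstacle is quantitative. A single pencil of the $\theta_k$ many $(n-k)$-spaces through a fixed $(n-k-1)$-space $\Sigma$ with $\Sigma\cap\cB=\{P\}$ distributes the weight $|\cB|-1$ over its members, each carrying at least one unit beyond $P$, and so exhibits only $a\ge 2\theta_k-|\cB|+1$ partners — short of the target by about $q^k-\theta_{k-1}$. The extremal model of two disjoint $k$-spaces, where $P$ actually has a whole $k$-space ($\theta_k$) of partners although any one pencil sees only a handful, shows the missing factor is genuinely structural: the partners of $P$ form a $k$-dimensional family, not the contents of one pencil. The heart of the proof is therefore to convert Result~\ref{Harrach} and the smallness $|\cB|-|\cB'|<q^k-\theta_{k-1}$ of the surplus into the statement that at least $\theta_k$ points survive as partners of $P$ while at most $|\cB|-|\cB'|\le|\cB|-2\theta_k$ of them can be neutralised by surplus points; this produces at least $3\theta_k-|\cB|\ge 3q^k+\theta_{k-1}-|\cB|$ simple essential points, as required. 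Making rigorous both that the minimal set contributes $\approx\theta_k$ partners and that each surplus point can cover the relevant tangent spaces of only boundedly many would-be partners is where the real work lies.
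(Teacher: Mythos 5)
There is a genuine gap, and you flag it yourself: the entire quantitative core of your argument --- that the minimal part $\cB'$ supplies roughly $\theta_k$ partners of $P$ and that each of the at most $|\cB|-2\theta_k$ surplus points ``neutralises'' at most one of them --- is asserted but not proved, and neither half is routine. Being essential only guarantees $P$ \emph{one} partner, so getting $\theta_k$ of them from an arbitrary minimal weighted double blocking set $\cB'$ already needs an argument; and a single surplus point lies on many $(n-k)$-spaces through $P$, so it can a priori destroy many would-be partners unless you first show every partner has many tangent spaces and then run a double count. Your projection set-up and the observation that each partner $Q_i$ is forced into $C$ (because the $(n-k)$-space with $U\cap\cB=\{P,Q_i\}$ must be non-rainbow) are correct, and you correctly identify Result~\ref{Harrach} as the engine behind the threshold $3q^k+\theta_{k-1}$, but you use it only to extract the unique minimal part of $\cB$, which does not by itself yield the count.

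The idea you are missing is to use Harrach's uniqueness in the contrapositive on an \emph{augmented} set rather than to count inside $\cB$. Let $Q_1,\ldots,Q_r$ be all the partners of $P$ and pick, for each $i$, a point $R_i\in PQ_i\setminus\{P,Q_i\}$. Any $(n-k)$-space $U$ through $P$ with $|U\cap\cB|=2$ satisfies $U\cap\cB=\{P,Q_i\}$ for some $i$ and hence contains the whole line $PQ_i$, in particular $R_i$; therefore $(\cB\cup R)\setminus\{P\}$ is still a $2$-fold $k$-blocking set. Since $P$ is essential, $\cB\cup R$ now contains two \emph{distinct} minimal $2$-fold $k$-blocking sets (one containing $P$, one avoiding it), so Result~\ref{Harrach} forces $|\cB|+r\geq 3q^k+\theta_{k-1}$, i.e.\ $r\geq 3q^k+\theta_{k-1}-|\cB|$, with no structural analysis of $\cB'$ or of the surplus needed at all. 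This one-line augmentation trick is what replaces the ``real work'' you correctly identify as outstanding.
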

\begin{proof}
For $Q\in\cB$, $Q\neq P$, let $P\sim Q$ iff $Q$ is also a simple point and there exists an $(n-k)$-space $U$ such that $U\cap\cB=\{P,Q\}$. As $P$ is simple and essential, we find at least one such point. Let $\{Q_1,\ldots,Q_r\}=\{Q\in\cB\colon P\sim Q\}$. For all $1\leq i\leq r$, take a point $R_i$ from $PQ_i\setminus\{P,Q_i\}$, and let $R=\{R_1,\ldots, R_r\}$. Then the set $(\cB\cup R)\setminus\{P\}$ is also a $2$-fold $k$-blocking set. Thus $\cB\cup R$ contains two different minimal $2$-fold $k$-blocking sets, so by Harrach's Result \ref{Harrach} we have $|\cB|+r\geq 3q^k+\theta_{k-1}$. As $Q_1,\ldots,Q_r$ must have the same color as $P$, the proof is finished.
\end{proof}

Now we are ready to show that if $\cB$ is not large, then it is, in fact, quite small. Roughly speaking, if $|\cB|<3q^k$, then $|\cB|<2q^k+2d+o(q^k)$. We will use that $3q^k+\theta_{k-1}-2\theta_{k}=3q^k+\theta_{k-1}-2\left(q^k+\theta_{k-1}\right)=q^k-\theta_{k-1}$.

\begin{prop}\label{medium}
Let $\beta\geq 2$. Assume $|\cB|\leq 3q^k+\theta_{k-1}+1-\beta$ and $(\beta-4)q^k>(\beta+4)\theta_{k-1}+\beta(2d+\beta-3)$. Then $|\cB|<2\theta_{k}+2d-2+\beta$.

\end{prop}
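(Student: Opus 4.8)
The plan is to reduce the statement to a bound on the number $m$ of nontrivial color classes and then extract a single quadratic inequality in $m$ that the hypothesis is designed to defeat. By Proposition~\ref{trivprop}~a) we have $|\cB|=2\theta_k+d-1+m$, so the desired inequality $|\cB|<2\theta_k+2d-2+\beta$ is equivalent to $m\le d+\beta-2$. I argue by contradiction, assuming $m\ge d+\beta-1$. Throughout I abbreviate $L:=3q^k+\theta_{k-1}+1-|\cB|$, the quantity appearing in Lemma~\ref{esspt}; the assumption $|\cB|\le 3q^k+\theta_{k-1}+1-\beta$ gives $L\ge\beta\ge2$, and substituting $|\cB|=2\theta_k+d-1+m$ yields the convenient form $L=q^k-\theta_{k-1}+2-d-m$.

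Next I count color classes against essential points. Since $|\cB|\le 3q^k+\theta_{k-1}-1<3q^k+\theta_{k-1}$, Result~\ref{Harrach} provides a \emph{unique} minimal $2$-fold $k$-blocking set $\cB'\subseteq\cB$; uniqueness forces every point of $\cB'$ to be essential for $\cB$ (removing such a point would leave a $2$-fold blocking set containing a second minimal one), so the set $\mathcal{E}$ of essential points satisfies $|\mathcal{E}|\ge|\cB'|\ge 2\theta_k$ by Proposition~\ref{folklore}. I split the $m$ nontrivial classes into those meeting $\mathcal{E}$ and those avoiding it. By Lemma~\ref{esspt} a class meeting $\mathcal{E}$ contains at least $L$ \emph{essential} points (the set $S\cup\{P\}$ there consists of essential points), so there are at most $|\mathcal{E}|/L$ such classes; each class avoiding $\mathcal{E}$ consists of at least two non-essential points, so there are at most $(|\cB|-|\mathcal{E}|)/2$ of them. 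Hence $m\le |\mathcal{E}|/L+(|\cB|-|\mathcal{E}|)/2$, and since $L\ge2$ the right-hand side is non-increasing in $|\mathcal{E}|$; plugging $|\mathcal{E}|=2\theta_k$, using $|\cB|-2\theta_k=d-1+m$, and rearranging gives $(m-d+1)L\le 4\theta_k$, that is $f(m)\le 4\theta_k$ for $f(x):=(x-d+1)\bigl(q^k-\theta_{k-1}+2-d-x\bigr)$.

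Finally I play the quadratic against the hypothesis. The function $f$ is a downward parabola with roots $d-1$ and $q^k-\theta_{k-1}+2-d$, and a direct expansion shows that the assumption $(\beta-4)q^k>(\beta+4)\theta_{k-1}+\beta(2d+\beta-3)$ is precisely $f(d+\beta-1)>4\theta_k$. The two abscissae $d+\beta-1$ and $q^k-\theta_{k-1}+2-d-\beta$ are symmetric about the vertex of $f$, so by concavity $f$ exceeds $4\theta_k$ on the entire closed interval between them. But the contradiction hypothesis gives $m\ge d+\beta-1$, while $L\ge\beta$ rewrites as $m\le q^k-\theta_{k-1}+2-d-\beta$; thus $m$ lies in that interval, forcing $f(m)>4\theta_k$ and contradicting $f(m)\le 4\theta_k$. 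I expect the counting step to be the main obstacle: the crux is to notice that Lemma~\ref{esspt} does not merely produce one large class but guarantees that \emph{every} class meeting $\mathcal{E}$ is saturated with essential points, which together with the lower bound $|\mathcal{E}|\ge 2\theta_k$ (secured by Harrach's uniqueness) is exactly what makes the estimate $m\le |\mathcal{E}|/L+(|\cB|-|\mathcal{E}|)/2$ strong enough for the concluding parabola bound to match the stated hypothesis term by term.
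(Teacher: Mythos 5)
Your proof is correct and follows essentially the same route as the paper's: bound the number of color classes via Result~\ref{Harrach} and Lemma~\ref{esspt}, reduce to a concave quadratic inequality, and evaluate it at the two symmetric abscissae determined by $\beta$. Your reformulation in terms of $m$ rather than $|\cB|$ is just an affine change of variable, and your extra care (why every point of $\cB'$ is essential, why the interval endpoints are correctly ordered) only makes explicit what the paper leaves implicit.
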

\begin{proof}
By Result \ref{Harrach} there is a unique minimal $2$-fold $k$-blocking set $\cB'$ contained in $\cB$. By Lemma \ref{esspt} we know that if a color class contains a point of $\cB'$, then it contains at least $3q^k+\theta_{k-1}+1-|\cB|$ points of it, while all other color classes in $\cB$ have at least two points. This and Proposition \ref{trivprop} $a)$ imply that
\[ |\cB|-2\theta_{k}+1-d= m\leq \frac{|\cB'|}{3q^k+\theta_{k-1}+1-|\cB|}+\frac{|\cB|-|\cB'|}{2}\leq\]\[\leq \frac{2\theta_{k}}{3q^k+\theta_{k-1}+1-|\cB|}+\frac{|\cB|-2\theta_{k}}{2},\]
so
\[\left(|\cB|-\left(2\theta_{k}+2d-2\right)\right)\left(3q^k+\theta_{k-1}+1-|\cB|\right)\leq 4\theta_{k}.\]
The left-hand side expression is concave in $|\cB|$. Substituting either $|\cB|=2\theta_{k}+2d-2+\beta=2q^k+2\theta_{k-1}+2d-2+\beta$ or $|\cB|= 3q^k+\theta_{k-1}+1-\beta$ we obtain
\[\beta \left(3q^k+\theta_{k-1}-2\theta_{k}-2d+3-\beta\right)\leq 4\theta_{k},\]
which, due to simple calculations and rearrangement, leads to $(\beta-4)q^k\leq (\beta+4)\theta_{k-1}+\beta(2d+\beta-3)$, a contradiction. As $|\cB|\leq 3q^k+\theta_{k-1}+1-\beta$, we conclude that $|\cB|<2\theta_{k}+2d-2+\beta$ must hold.
\end{proof}

Using these results, we may assume that $\cB$ is quite small. As shown by the next proposition, this immediately gives the desired result on the upper chromatic number provided that $\cB$ contains the union of two disjoint one-fold blocking sets, which property can be deduced from a stability type result on multiple blocking sets like Theorem \ref{blsetthm} or Result \ref{KM}; however, the strength of the result obtained in this way will be utterly dependent on the strength of the stability result.

\begin{prop}\label{small}
Suppose that $\cB$ contains two disjoint $k$-blocking sets, $U_1$ and $U_2$. If the coloring is nontrivial, then $|U_1|+|U_2|\geq 4 (3q^k-|\cB|+\theta_{k-1})$; in particular, $|\cB|\geq 2.4q^k+0.8\theta_{k-1}$ and, if $U_1$ and $U_2$ are $k$-spaces, then $|\cB|\geq 2.5q^k+\frac12\theta_{k-1}$.
\end{prop}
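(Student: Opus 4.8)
The plan is to exploit Lemma \ref{esspt} together with the fact that $U_1$ and $U_2$ are \emph{disjoint} blocking sets, which turns the ``$\geq\mu$ partners'' conclusion of that lemma into a count \emph{inside} $U_1$ and $U_2$ themselves. Write $\mu=3q^k+\theta_{k-1}-|\cB|$. If $\mu\leq 0$ the right-hand side is non-positive and there is nothing to prove, so I assume $\mu\geq 1$; then $|\cB|<3q^k+\theta_{k-1}$, and Result \ref{Harrach} supplies a unique minimal $2$-fold $k$-blocking set $\cB'\subseteq\cB$, whose points are precisely the essential points of $\cB$ (uniqueness forces each $P\in\cB'$ to be essential and each essential point to lie in $\cB'$). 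I will use that every point of $U_1\cup U_2$ is essential, equivalently $U_1,U_2\subseteq\cB'$; this is automatic when the disjoint blocking sets are extracted from the stability results of Section \ref{sec:blsets} applied to $\cB'$. Note that $U_1\cup U_2$ is itself a $2$-fold $k$-blocking set, so a monochromatic $U_1\cup U_2$ would be a monochromatic $2$-transversal, i.e.\ a trivial coloring by Definition \ref{trivcolor}.

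The central step is the following localization of partners. Let $P\in U_1$. Since $P$ is simple and essential, Lemma \ref{esspt} yields at least $\mu$ points $Q$, all of the same color as $P$, each arising as the second point of an $(n-k)$-space $W$ with $W\cap\cB=\{P,Q\}$. Because $U_2$ meets every $(n-k)$-space, $W$ meets $U_2$, and as $P\notin U_2$ this intersection point must be $Q$; hence all $\geq\mu$ partners of $P$ lie in $U_2$ and carry the color of $P$. Symmetrically, every point of $U_2$ has $\geq\mu$ partners in $U_1$ of its own color. Observe also that any such partner lies on a $2$-secant of $\cB$, so it is itself essential.

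Now I count colors. Let $t_1$ and $t_2$ be the numbers of colors appearing on $U_1$ and on $U_2$. For each color $c$ occurring on $U_2$, choose a point $Q_c\in U_2$ of color $c$; its $\geq\mu$ partners lie in $U_1$ and all have color $c$, so the partner sets belonging to distinct colors are disjoint and $|U_1|\geq t_2\mu$. Symmetrically $|U_2|\geq t_1\mu$, whence $|U_1|+|U_2|\geq(t_1+t_2)\mu$. It remains to show $t_1\geq 2$ and $t_2\geq 2$. If $U_1$ were monochromatic, say of color $a$, then every point of $U_2$, being essential, would have a partner in $U_1$ sharing its color; that partner has color $a$, so the point has color $a$, forcing $U_2$—and hence $U_1\cup U_2$—to be monochromatic, contradicting nontriviality. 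Thus $t_1\geq 2$, and likewise $t_2\geq 2$, giving $|U_1|+|U_2|\geq 4\mu=4(3q^k-|\cB|+\theta_{k-1})$. The two ``in particular'' statements then follow by elementary rearrangement: combining $|\cB|\geq|U_1|+|U_2|\geq 4\mu$ yields $5|\cB|\geq 12q^k+4\theta_{k-1}$, i.e.\ $|\cB|\geq 2.4q^k+0.8\theta_{k-1}$, while substituting $|U_1|+|U_2|=2\theta_k$ for disjoint $k$-spaces gives $4|\cB|\geq 10q^k+2\theta_{k-1}$, i.e.\ $|\cB|\geq 2.5q^k+\tfrac12\theta_{k-1}$.

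The step I expect to be the main obstacle is not the counting but securing the hypotheses of Lemma \ref{esspt}: one must know that the points of $U_1\cup U_2$ are genuinely essential \emph{for $\cB$} (not merely for $U_1$ or $U_2$), since only then are the forced partners the second point of a true $2$-secant of $\cB$ and hence constrained in color. This is precisely what ties the $U_i$ to the minimal blocking set $\cB'$ of Result \ref{Harrach}; once essentiality is in place, the disjointness of $U_1$ and $U_2$ makes the ``partners lie across'' observation—and with it the factor $4$ rather than the naive factor $2$—essentially automatic.
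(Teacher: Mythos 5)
Your proof is correct and follows essentially the same route as the paper's: Harrach's Result \ref{Harrach} pins down the essential points, Lemma \ref{esspt} supplies the $\geq\mu$ equicolored partners, disjointness of $U_1$ and $U_2$ forces each point's partners into the opposite blocking set, and a two-colors-times-two-sides count yields the factor $4$ and the stated bounds. The only divergence is bookkeeping: rather than counting the numbers $t_1,t_2$ of colors per side (which, as you note, needs every point of $U_1\cup U_2$ to be essential), the paper ping-pongs one red and one green essential point of $\cB'\subseteq U_1\cup U_2$ between the two sides, which needs only the existence of two colors on $\cB'$ and so sidesteps the essentiality worry you flag.
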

\begin{proof}
We may assume that $|\cB|<3q^k+\theta_{k-1}$, otherwise the assertions are trivial. Then, by Result \ref{Harrach}, $\cB'=U_1\cup U_2$ is precisely the set of essential points of $\cB$. If the coloring is not trivial, then there are at least two colors used in $\cB'$, say, red and green. Without loss of generality we may take a red point $P\in U_1$. By Lemma \ref{esspt}, we find a set $S$ of essential points of $\cB$ such that $|S|=3q^k-|\cB|+\theta_{k-1}$, and for each point $Q\in S$ there is an $(n-k)$-space $U_Q$ such that $\cB\cap U_Q=\{P,Q\}$. Thus all points of $S$ are red. As $U_2$ is a $k$-blocking set, $\forall \, Q\in S\colon U_Q\cap U_2=\{Q\}$, so $S\subset U_2$. By interchanging the role of $U_1$ and $U_2$, we see that $U_1$ and $U_2$ both contain at least $3q^k-|\cB|+\theta_{k-1}$ red points. As the same holds for green points as well, we find that $4 (3q^k-|\cB|+\theta_{k-1})\leq |U_1|+|U_2|\leq |\cB|$, thus $|\cB|\geq 2.4q^k+0.8\theta_{k-1}$ in general; if $U_1$ and $U_2$ are $k$-spaces, substituting $|U_1|=|U_2|=\theta_{k}=q^k+\theta_{k-1}$ gives the assertion.
\end{proof}

The next lemma shows under what conditions does Proposition \ref{big} provide a good enough bound on $|\cB|$ to make Proposition \ref{medium} work with $\beta=5$, the value we will typically use.

\begin{lemma}\label{p38works}
Assume $d\leq \alpha q^k$ for some $0\leq \alpha\leq \frac{1}{2}$. Suppose that either
\begin{enumerate}
\item{$k=1$, $q\geq 5$ and $d\leq\min\left\{\frac{q}{10}-2,\,\frac{q(\sqrt{2}-1)}{2} - \frac{9}{2}\right\}$, or}
\item{$k\geq 2$, $q\geq 13$ and $d\leq \frac{q^k}{10}-\frac{9q^{k-1}}{10}-\frac{28q^{k-2}}{10}$.}
\end{enumerate}
Then $|\cB|\leq 2\theta_{k}+2d+2$.
\end{lemma}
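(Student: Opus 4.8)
The plan is to read off the conclusion in one stroke from Proposition \ref{medium} applied with $\beta=5$. Its conclusion then reads $|\cB|<2\theta_k+2d-2+5=2\theta_k+2d+3$, and since $|\cB|$ and $d$ are integers this is precisely $|\cB|\le 2\theta_k+2d+2$, which is what we want. So the entire task reduces to verifying the two hypotheses of Proposition \ref{medium} for $\beta=5$, namely the size bound
\[
|\cB|\le 3q^k+\theta_{k-1}+1-\beta=3q^k+\theta_{k-1}-4,
\]
and the numerical inequality $(\beta-4)q^k>(\beta+4)\theta_{k-1}+\beta(2d+\beta-3)$, that is,
\[
q^k> 9\theta_{k-1}+10d+10. \qquad \text{(B)}
\]

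To obtain the size bound I would feed Proposition \ref{big} into it. First one checks the hypothesis of Proposition \ref{big}: using $\alpha\le\tfrac12$, the requirement $q>\bigl(\tfrac{5}{\sqrt2}-2-\alpha-\tfrac4q\bigr)^{-1}$ is equivalent to $\bigl(\tfrac{5}{\sqrt2}-2-\alpha\bigr)q>5$, which holds already for $q\ge5$ (hence a fortiori for $q\ge13$). Proposition \ref{big} then yields $|\cB|<(4-\sqrt2)q^k+4\theta_{k-1}+2d+2$, so the size bound follows as soon as the right-hand side is at most $3q^k+\theta_{k-1}-4$, i.e.
\[
(\sqrt2-1)q^k\ge 3\theta_{k-1}+2d+6. \qquad \text{(A)}
\]
Thus everything comes down to checking (A) and (B) from the explicit bounds on $d$ in the two cases.

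For $k=1$ we have $\theta_{k-1}=\theta_0=1$, so (A) reads $d\le\frac{(\sqrt2-1)q}{2}-\frac92$ and (B) reads $d<\frac{q}{10}-\frac{19}{10}$; these are exactly the two terms of the assumed minimum, the second being implied by $d\le\frac{q}{10}-2$. For $k\ge2$ I would substitute $9\theta_{k-1}=9q^{k-1}+9\theta_{k-2}$ in (B): the assumed bound $d\le\frac{q^k}{10}-\frac{9q^{k-1}}{10}-\frac{28q^{k-2}}{10}$ gives (B) provided $28q^{k-2}\ge 9\theta_{k-2}+10$, which holds for $q\ge13$ by Lemma \ref{lemmata}c). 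For (A), substituting $3\theta_{k-1}=3q^{k-1}+3\theta_{k-2}$ together with the bound on $d$ and cancelling the $\frac15 q^k$ terms reduces the claim to $1.2\,q^{k-1}+3\theta_{k-2}-5.6\,q^{k-2}+6\le(\sqrt2-1.2)q^k$; for $q\ge13$ the left side is at most $2.2\,q^{k-1}$ (using Lemma \ref{lemmata}c) to make $3\theta_{k-2}-5.6\,q^{k-2}<0$ and $q^{k-2}\ge13>6$), while the right side is at least $(\sqrt2-1.2)\cdot13\cdot q^{k-1}>2.7\,q^{k-1}$, the borderline case $k=2$ (where $\theta_{k-2}=q^{k-2}=1$) being checked by hand.

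There is no conceptual obstacle here: the lemma is a numerical repackaging of Propositions \ref{big} and \ref{medium} with $\beta=5$. The only delicate part, and the one to carry out with care, is the bookkeeping of the lower-order terms $\theta_{k-1},\theta_{k-2}$ and the matching of constants. The point worth noticing is that (A) (governed by the leading coefficient $\sqrt2-1\approx0.414$ coming from Proposition \ref{big}) and (B) (governed by the $\frac1{10}$-scale slack coming from Proposition \ref{medium}) are genuinely independent bottlenecks, which is exactly why the hypothesis for $k=1$ is stated as the minimum of two separate quantities.
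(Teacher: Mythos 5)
Your proposal is correct and follows essentially the same route as the paper: check the hypothesis of Proposition \ref{big} under $\alpha\le\frac12$, feed its conclusion into Proposition \ref{medium} with $\beta=5$, reduce everything to the two numerical conditions (A) and (B), and finish by integrality. The only cosmetic difference is that for $k\ge2$ the paper phrases the verification in terms of sufficient bounds on $\alpha$ (showing the (A)-condition is the weaker of the two for $q\ge9$), whereas you substitute the explicit bound on $d$ directly; the computations are equivalent.
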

\begin{proof}
It is easy to see that the requirement $q > \left(\frac{5}{\sqrt{2}}-2-\alpha-\frac{4}{q}\right)^{-1}>0$ of Proposition \ref{big} holds under $\alpha\leq \frac{1}{2}$ and $q\geq 5$, hence we can conclude that $|\cB|<(4-\sqrt{2})q^k+4\theta_{k-1}+2d+2$. Thus to meet the assumptions of Proposition \ref{medium} with $\beta=5$, it is enough to have
\begin{eqnarray}
q^k&>&9\theta_{k-1}+5(2d+2) \label{medreq1} \mbox{ and }\\
(4-\sqrt{2})q^k+4\theta_{k-1}+2d+2&\leq& 3q^k+\theta_{k-1}-4 \mbox{ or, equivalently,}\\
(\sqrt{2}-1)q^k&\geq&3\theta_{k-1}+2d+6\label{medreq2}.
\end{eqnarray}
For $k=1$, \eqref{medreq1} and \eqref{medreq2} demand $d<\frac{q-19}{10}$ and $d\leq \frac{q(\sqrt{2}-1)}{2}-\frac{9}{2}$.

For $k\geq 2$, using Lemma \ref{lemmata} $a)$ and $c)$ we see that to satisfy \eqref{medreq1} it is enough to have
\[9\left(1+\frac2q\right)q^{k-1}+10\alpha q^k+10\leq q^k,\]
hence, as $k\geq 2$, it is sufficient to require
\[\alpha\leq \frac{1}{10}-\frac{9}{10q}-\frac{28}{10q^2}.\]
Regarding \eqref{medreq2}, we can similarly deduce that
\[(\sqrt{2}-1)q^k\geq 3\left(1+\frac{2}{q}\right)q^{k-1}+2\alpha q^k+6\]
is enough, hence so is
\[ \alpha \leq  \frac{\sqrt{2}-1}{2}-\frac{3}{2q}-\frac{6}{q^2}.\]
It is easy to see that the latter requirement is weaker for $q\geq 9$, so the former one is enough, which is positive if $q\geq 13$. Thus under these conditions Proposition \ref{medium} yields $|\cB|<2\theta_{k}+2d+3$. As the quantities on both sides are integers, the proof is finished.
\end{proof}

\begin{remark}\label{p38worksrem}
If $q\geq 25$, all conditions of Lemma \ref{p38works} are satisfied under $d\leq \frac{q^k}{10}-2q^{k-1}$.
\end{remark}

The considerations so far are enough to prove Theorems \ref{ucnthm} and \ref{ucnstabp}.

\begin{proof}[Proof of Theorem \ref{ucnthm}]
We recall the assumptions $n\geq 3$, $1\leq k<\frac{n}{2}$, $q\geq 17$ if $k=1$ and $q\geq 13$ if $k\geq 2$. Under these, the requirements of Lemma \ref{p38works} are met for $d\leq -1$, $\alpha=0$, so we conclude that $|\cB|\leq 2\theta_{k}$. Result \ref{KM} asserts that $\cB$ contains the union of two $k$-spaces (which are disjoint as $\cB$ is not weighted). Proposition \ref{small} yields that either $|\cB|\geq 2.5q^k+0.5\theta_{k-1}$, a contradiction due to $q$ being large enough, or the coloring is trivial, in which case $d\geq 0$, a contradiction. Thus there is no coloring with $d\leq -1$, in other words, $\UCN(\cH(n,n-1,q))\geq \theta_n-2\theta_{k}+1$. Equality can be reached by trivial colorings since, as $k<\frac{n}{2}$, we can always find two disjoint $k$-spaces, whose union is clearly a $2$-fold $k$-blocking set.
\end{proof}

\begin{proof}[Proof of Theorem \ref{ucnstabp}]
Suppose $d\leq \frac12\left( (\sqrt{2}-1)q^k - 3\theta_{k-1} - 8 \right)$ and $q\geq 11$, $q$ prime. As $11 > \left(\frac{5}{\sqrt{2}}-2-\frac14-\frac{4}{11}\right)^{-1}\approx1.08$, we can apply Proposition \ref{big} with $\alpha=\frac14$ to obtain $|\cB| < (4-\sqrt{2})q^k+4\theta_{k-1}+2d+2 = 3q^k+\theta_{k-1}-6$.
Set $\beta=8$. Then $|\cB|\leq 3q^k+\theta_{k-1}+1-\beta$ and, using $d < \frac{q^k}{4}-\theta_{k-1}-4$, we also have $(\beta-4)q^k>(\beta+4)\theta_{k-1}+\beta(2d+\beta-3)$, so Proposition \ref{medium} applies and yields $|\cB|<2\theta_{k}+2d+6 < 2.5q^k$. Hence, by Theorem \ref{blsetthm}, $\cB$ contains two disjoint $k$-spaces; call them $U_1$ and $U_2$. (Note that this is possible only if $k<\frac{n}{2}$, hence we obtain a contradiction for $k\geq \frac{n}{2}$ showing that no proper coloring satisfies the condition on $d$.) As $|\cB|<2.5q^k$, Proposition \ref{small} claims that our coloring is trivial.

Suppose now that $q$ is not a prime, and recall that our assumptions in this case are $q\geq 25$ and $d\leq \frac12\left(q^{k-1}-\qbinom{k-1}{1}-3\right)$. To apply Lemma \ref{p38works} we need $d\leq \frac{q^k}{10}-2q^{k-1}$, which follows from $d<\frac{q^{k-1}}{2}$ and $q\geq 25$; hence we obtain $|\cB|\leq 2\theta_{k}+2d+2$. The assumed upper bound for $d$ is equivalent to $2d+2\leq (q-2)\frac{q^{k-1}-1}{q-1}$, so we may apply Result \ref{KM} with $t=2$ and $r=q-2$ to see that $\cB$ contains the union of two disjoint $k$-spaces (again, $k \geq \frac{n}{2}$ gives a contradiction). As $|\cB| <  2\theta_{k}+q^{k-1} < 2.5q^k$ clearly holds, Proposition \ref{small} claims that the coloring is trivial.
\end{proof}

\begin{remark}
We do not believe that the upper bound $d\lesssim 0.2q^k$ for the $q$ prime case in the above result is close to be sharp. We think that the limit should be roughly $d\lesssim 0.5q^k$ but to achieve this, one needs to improve Propositions \ref{big} and \ref{medium} significantly, or to use a different approach. Improving only Proposition \ref{big} would allow us to prove the same assertion under $d\lesssim0.25q^k$ (this is the best allowed by Proposition \ref{medium}).
\end{remark}

\subsection{Improvements when $q$ is not a prime: the proof of Theorem \ref{ucnstabq}}

We recall that $\cB=\cB(\cC)$ denotes the union of color classes in the proper coloring $\cC$ with at least two elements, so $\cB$ is a $2$-fold $k$-blocking set in $\PG(n,q)$ colored in a way that each $(n-k)$-space contains at least two points of $\cB$ of the same color. We will rely on some initial assumptions that are essential in the sense that they cannot be changed significantly so that the reasoning still works, which will be derived from the much more restrictive but adjustable requirements of Theorem \ref{ucnstabq} that may be fine-tuned to obtain a similar result. We aim to treat these somewhat separately in order to make future parameter adjustments easier.

\textbf{Initial assumptions:} $q\geq 25$, and $d\leq \frac{q^k}{10}-2q^{k-1}-1$.

As Theorem \ref{ucnstabq} requires $p\geq 11$ and $h\geq 2$, $q = p^h\geq25$ clearly holds. Moreover, the assumption on the number $N$ of colors can be rephrased as an upper bound $d\leq \frac{\gamma}{2}q^k-\theta_{k-1}-\frac32$ with $\gamma=\frac{1}{100}$ (we introduce $\gamma$ as it is, in fact, an adjustable parameter). From this, $d\leq \frac{q^k}{10}-2q^{k-1}-1$ follows if, say, $\gamma\leq \frac{1}{10}$. For a while, let us fix $\gamma\leq \frac{1}{10}$ but leave $\gamma\in\mathbb{R}$ undefined, and require $d\leq \frac{\gamma}{2}q^k-\theta_{k-1}-\frac32$.

The initial assumptions, by Remark \ref{p38worksrem} and Lemma \ref{p38works}, allow us to conclude that $|\cB|\leq 2\theta_{k}+2d+2\leq 2\theta_k+2(\frac{q^k}{10}-2q^{k-1}) \leq  2.2\theta_{k} - 4q^{k-1}$.

Let $\cB'$ denote the unique minimal 2-fold $k$-blocking set contained in $\cB$ (which is the set of essential points for $\cB$, cf.~Result \ref{Harrach}). We want to prove that $\cB'$ is monochromatic; to this end, let us suppose to the contrary that $\cB$ contains a red and a green essential point as well. As $|\cB|\leq 2.2\theta_{k} - 4q^{k-1} < \frac52q^{k}-\frac12$ clearly follows, the $t \mod p$ property (Result \ref{FSSzW}) holds for $\cB'$. We want to show that the coloring $\cC$ is trivial; in other words, $\cB$ contains a monochromatic $2$-fold $k$-blocking set.

We consider three cases depending on the relation between $n$ and $2k$. Our main case is when $n=2k$, in which situation the famous Andr\'e--Bruck--Bose representation of projective planes shall be used to enable us using planar tools. The cases $n>2k$ and $n<2k$ will be traced back to this one in the following way. During this procedure the dimension of the host space, the dimension of the subspaces we want to color properly, the coloring etc.\ may change. We will refer to these modified objects by their original notation equipped with a bar.

\subsubsection{$n\leq2k$}

If $n \leq 2k$ then we simply embed this projective space into $\PG(2k,q)$, and let $\overline{n}=2k$ and $\overline{k}=k$ (i.e., we do nothing if $n=2k$). Color the new points with new and pairwise different colors. After the embedding we get a strict proper coloring $\overline{\cC}$ of $\cH(2k,k,q)=\cH(\overline{n},\overline{n}-\overline{k},q)$ (a $k$-space of $\PG(2k,q)$ intersects the embedded $n$-space containing $\cB$ in a $k+n-2k=n-k$ dimensional subspace). Note that $\cB$ and $\cB'$ are left unchanged, so $|\cB|\leq 2.2\theta_{k} - 4q^{k-1}$ still holds, and from $d\leq \frac{\gamma}{2}q^k-\theta_{k-1}-\frac32$ and $|\cB|\leq 2\theta_{k}+2d+2$ we obtain $|\cB|\leq (2+\gamma)q^k-1\leq 2(q^k+1)+\gamma q^k$ (this last upper bound stands here for future purposes).

\subsubsection{$n>2k$}

Let us embed $\PG(n,q)$ into $\PG(2n-2k,q)$ and let us take an $(n-2k-1)$-space $\cV \subset \PG(2n-2k,q)$ which is disjoint from $\PG(n,q)$ (considered now as a given $n$-space of $\PG(2n-2k,q)$); thus $\PG(2n-2k,q)$ is generated by the original $\PG(n,q)$ and $\cV$. We build a cone $\cK$ upon the base $\cB$ with vertex $\cV$; that is, the cone $\cK$ consists of the points of the lines joining a point $X\in \cB$ with a point $Y \in \cV$.

\begin{lemma}\label{egyszeresmarad}
For an arbitrary point $P \in \PG(2n-2k,q) \setminus (\PG(n,q)\cup\cV)$ there exist a unique pair of points $X\in\PG(n,q)$ and $Y\in\cV$ such that the line defined by $X$ and $Y$ contains $P$.
\end{lemma}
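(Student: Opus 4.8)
The plan is to pass to the underlying vector space and exploit the fact that $\PG(n,q)$ and $\cV$ are \emph{complementary} (skew) subspaces. Write $\PG(2n-2k,q)=\mathbb{P}(W)$ for a vector space $W$ over $\GF(q)$ of dimension $2n-2k+1$, and let $W_1,W_2\leq W$ be the subspaces with $\mathbb{P}(W_1)=\PG(n,q)$ and $\mathbb{P}(W_2)=\cV$, so that $\dim W_1=n+1$ and $\dim W_2=n-2k$. Since $\PG(n,q)$ and $\cV$ are disjoint, $W_1\cap W_2=\{0\}$; and since $\dim W_1+\dim W_2=(n+1)+(n-2k)=2n-2k+1=\dim W$, we obtain the direct-sum decomposition $W=W_1\oplus W_2$. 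Everything in the lemma should reduce to the uniqueness of decompositions in this direct sum.

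For existence, I would represent $P$ by a nonzero vector $\mathbf{p}\in W$ and write $\mathbf{p}=\mathbf{x}+\mathbf{y}$ with $\mathbf{x}\in W_1$ and $\mathbf{y}\in W_2$ according to the decomposition. The hypothesis $P\notin\cV$ forces $\mathbf{x}\neq 0$ (otherwise $\mathbf{p}\in W_2$), while $P\notin\PG(n,q)$ forces $\mathbf{y}\neq 0$; hence $X:=\langle\mathbf{x}\rangle$ is a genuine point of $\PG(n,q)$ and $Y:=\langle\mathbf{y}\rangle$ a genuine point of $\cV$. As $\mathbf{x}$ and $\mathbf{y}$ lie in subspaces meeting only in $0$ and are both nonzero, they are linearly independent, so $\langle\mathbf{x},\mathbf{y}\rangle$ is a line, namely the line $XY$, and the equation $\mathbf{p}=\mathbf{x}+\mathbf{y}$ shows $P\in XY$.

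For uniqueness, suppose $P$ also lies on a line $X'Y'$ with $X'\in\PG(n,q)$, $Y'\in\cV$, spanned by vectors $\mathbf{x}'\in W_1$ and $\mathbf{y}'\in W_2$. Then $\mathbf{p}=a\mathbf{x}'+b\mathbf{y}'$ for some scalars $a,b\in\GF(q)$, and comparing this with $\mathbf{p}=\mathbf{x}+\mathbf{y}$ through the uniqueness of the direct-sum decomposition yields $a\mathbf{x}'=\mathbf{x}$ and $b\mathbf{y}'=\mathbf{y}$. Since $\mathbf{x}\neq 0\neq\mathbf{y}$, the scalars $a,b$ are nonzero, whence $X'=\langle\mathbf{x}'\rangle=\langle\mathbf{x}\rangle=X$ and likewise $Y'=Y$, giving the claimed uniqueness of the pair.

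I do not expect any genuine obstacle: this is the standard fact that a point lying off two complementary subspaces determines a unique transversal line meeting both. The only steps requiring care are verifying that \emph{both} components of $\mathbf{p}$ are nonzero — so that $X$ and $Y$ are bona fide points and $XY$ a bona fide line — which is precisely where the assumption $P\in\PG(2n-2k,q)\setminus(\PG(n,q)\cup\cV)$ is used, and confirming the dimension count $\dim W_1+\dim W_2=\dim W$ that upgrades disjointness into a direct sum (this is exactly the place where the hypothesis $n>2k$, making $\cV$ nonempty of projective dimension $n-2k-1\geq 0$, becomes relevant).
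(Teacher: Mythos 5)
Your proof is correct, and it takes a genuinely different (though closely related) route from the paper's. The paper argues projectively: any transversal line $XY$ with $X\in\PG(n,q)$, $Y\in\cV$ and $P\in XY$ must be contained in $\langle P,\cV\rangle\cap\langle P,\PG(n,q)\rangle$, and the dimension formula shows this intersection is a subspace of projective dimension $(n-2k-1+1)+(n+1)-(2n-2k)=1$, i.e.\ a single line; hence the transversal line, and with it the pair $(X,Y)$, is unique. Note that the paper's wording (``if a good pair $X$, $Y$ exists then\ldots'') only establishes uniqueness and leaves existence implicit. You instead linearize: disjointness of $\PG(n,q)$ and $\cV$ plus the count $(n+1)+(n-2k)=2n-2k+1$ upgrades to the direct sum $W=W_1\oplus W_2$, and the unique decomposition $\mathbf{p}=\mathbf{x}+\mathbf{y}$ delivers existence and uniqueness in one stroke, with the hypothesis $P\notin\PG(n,q)\cup\cV$ used exactly where it should be, namely to guarantee that both components are nonzero so that $X$, $Y$ are genuine points and $XY$ a genuine line. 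The two arguments rest on the same dimension count; yours has the small advantage of making the existence half fully explicit, while the paper's is more synthetic and identifies the transversal line concretely as $\langle P,\cV\rangle\cap\langle P,\PG(n,q)\rangle$.
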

\begin{proof}
If a good pair $X$, $Y$ exists then, clearly, the line $XY$ is contained in $\langle P,\cV\rangle \cap \langle P,\PG(n,q)\rangle$, which is a subspace of dimension $(n-2k-1+1) + (n+1) -(2n-2k) = 1$. Hence a line of this type is unique, and it defines the points $X$ and $Y$ in a unique way.
\end{proof}

The points of $\PG(2n-2k,q)$ not in $\cK$ get pairwise distinct new colors, and let us color the points of $\cK$ in the following way. The points of $\cV$ will get the color of an arbitrarily chosen point of $\cB$, and the points of $\cK\setminus(\cB\cup\cV)$ get the color of their well-defined ancestor (the unique point $X$ in Lemma \ref{egyszeresmarad}) in $\cB$. Finally, let us give weight two to the points of $\cV$. In this way, the coloring of $\cH(2n-2k,n-k,q)$ is proper, since if an $(n-k)$-space $U$ meets $\cV$ then it is blocked by $\cK$ trivially, and if it is skew to $\cV$ then $\langle \cV,U \rangle$ will be an $(2n-3k)$-space such that it meets $\PG(n,q)$ in an $(n-k)$-space $W$, thus $W$ contains two points of $\cB$ of the same color and, by the cone structure, $U$ contains two points of $\cK$ of the same color. Also, the red and the green essential points for $\cB$ in $\PG(n,q)$ remain essential for $\cK$, hence $\cK$ contains a red and a green essential point of weight one.

Let $\overline{n}=2n-2k$, $\overline{k}=n-k$. Note that except for the points of $\cV$, each point of $\cK$ has weight one, and $n-2k-1=\dim(\cV) \leq \frac{2n-2k}{2}-2=\frac{\overline{n}}{2}-2$. Furthermore, the number of points of $\cK$ (with weights) is $|\cB|+2|\cV|+|\cB||\cV|(q-1) = |\cB|+2\theta_{n-2k-1}+|\cB|(q^{n-2k}-1)=|\cB|q^{n-2k}+2\theta_{n-2k-1}$. On the one hand, as $|\cB|\leq 2\theta_k+2d+2 \leq 2\theta_k + 2(\frac{q^k}{10}-2q^{k-1})$, we obtain $|\cK|\leq 2(\theta_{n-k}-\theta_{n-2k-1}) + 2(\frac{q^{n-k}}{10}-2q^{n-k-1}) + 2\theta_{n-2k-1} = 2\theta_{\overline{k}}+2(\frac{q^{\overline{k}}}{10}-2q^{\overline{k}-1})\leq 2.2\theta_{\overline{k}}-4q^{\overline{k}-1}$; on the other hand, from the assumption $d\leq \frac{\gamma}{2}q^k-\theta_{k-1}-\frac32$ we get $|\cB|\leq 2\theta_k+2d+2\leq (2+\gamma)q^k-1$, whence $|\cK|\leq (2+\gamma)q^{n-k}-q^{n-2k}+2\theta_{n-2k-1}\leq (2+\gamma)q^{n-k} \leq 2(q^{\overline{k}}+1)+\gamma q^{\overline{k}}$.

\subsubsection{The main case, $n=2k$}

In both of the above cases, we ended up in a projective space of order $\overline{n}=2\overline{k}$ whose points admit a proper coloring with respect to $\overline{k}$-spaces, and the union of the color classes of size at least two form a $2$-fold $\overline{k}$-blocking set $\overline{\cB}$ of size at most $|\overline{\cB}|\leq 2.2\theta_{\overline{k}} - 4q^{\overline{k}-1}$ on the one hand, and $|\overline{\cB}|\leq 2(q^{\overline{k}}+1)+\gamma q^{\overline{k}}$ on the other hand. Moreover, $\overline{\cB}$ is either non-weighted, or the set of points with weight more than one is a subplane of dimension at most $\overline{k}-2$, and all points in this subplane are of weight two. In both cases, our indirect assumption assures that there exist red and green essential points of weight one. From now on we will work in this setting only, so we reset the notation and omit the bars.

For future purposes, we need to find a hyperplane $H$ that intersects $\cB$ in at most $2.2\theta_{k-1}$ points and contains all points of weight two (if there is any). If $k=1$, we are done (otherwise $\cB$ blocks every line of $\PG(2,q)$ at least three times, so $|\cB|\geq 3(q+1)$, a contradiction). Suppose now $k\geq 2$, and recall $|\cB|\leq 2.2\theta_k - 4q^{k-1}$. Let $U_{-2}$ be the $(k-2)$-space consisting of the points of weight two or, if there are no such points, an arbitrary $(k-2)$-space. Among the $\theta_{k+1}$ distinct $(k-1)$-spaces containing $U_{-2}$, there must be one, say, $U_{-1}$, that contains no point of $\cB\setminus U_{-2}$, otherwise $|\cB|\geq \theta_{k+1}>2.2\theta_{k}$, a contradiction. Among the $\theta_k$ distinct $k$-spaces containing $U_{-1}$ there must be one, say, $U_0$, that contains at most two points of $\cB\setminus U_{-1}$, otherwise $|\cB|\geq 3\theta_k>2.2\theta_k$. Suppose now that the $(k+i)$-space $U_i$ contains at most $2.2q^i$ points of $\cB\setminus U_{i-1}$ ($0\leq i\leq k-3$). Then among the $\theta_{k-i-1}$ distinct $(k+i+1)$-spaces containing $U_i$, there must be one, say, $U_{i+1}$, that contains at most $2.2q^{i+1}$ points of $\cB\setminus U_{i}$, otherwise $|\cB|> 2.2q^{i+1}\theta_{k-i-1}=2.2\theta_k-2.2\theta_i > 2.2\theta_{k}-2.2\theta_{k-2} > 2.2\theta_k - 4q^{k-1} \geq |\cB|$, a contradiction.
To find an appropriate hyperplane $U_{k-1}$, we claim that among the $\theta_1=q+1$ distinct $(2k-1)$-spaces containing $U_{k-2}$ there is one that contains at most $2.2q^{k-1}-2\theta_{k-2}$ points of $\cB\setminus U_{k-2}$, otherwise $|\cB| >  \left(2.2q^{k-1}-2\theta_{k-2}\right)(q+1) = 2.2q^k + 2.2q^{k-1} -2(\theta_{k-1}-1)-2\theta_{k-2} = 2.2\theta_k - 2 \theta^{k-1} -4.2\theta_{k-2} + 2 = 2.2\theta_k -2q^{k-1} - 6.2\frac{q^{k-1}-1}{q-1} +2 > 2.2\theta_k - 4q^{k-1} \geq |\cB|$, a contradiction (where we use $q\geq 25$).
Thus we find an $(n-1)$-space $U_{k-1}$ such that
\[
|\cB\cap U_{k-1}| = |\cB\cap U_{-2}| + \sum_{i=0}^{k-1}|(\cB\setminus U_{i})\cap U_{i+1}| \leq |\cB\cap U_{-2}| + 2.2\theta_{k-2} +
\]
\[
+ 2.2q^{k-1} - 2\theta_{k-2} \le 2\theta_{k-2} + 2.2\theta_{k-1} - 2\theta_{k-2} = 2.2\theta_{k-1}.
\]
We set $H=U_{k-1}$ to be the hyperplane (a $(2k-1)$-space) admitting the properties claimed. Andr\'e \cite{Andre} and independently Bruck and Bose \cite{BB1,BB2} developed a method, the well-known \emph{Andr\'e-Bruck-Bose representation}, for representing translation planes of order $q^{h}$ with kernel containing $\GF(q)$ in the projective space $\PG(2h,q)$. It arises from a suitable $(h-1)$-spread of the hyperplane at infinity in $\PG(2h,q)$. The affine lines of the plane are $h$-dimensional subspaces containing the $(h-1)$-spaces of the $(h-1)$-spread. The ideal points correspond to the elements of the spread. Thus a point set intersecting every $h$-space yields a blocking set in the plane $\PG(2,q^h)$.

It is well-known that an arbitrary $(k-1)$-space can be mapped to any other $(k-1)$-space with a suitable linear transformation. By the previous observations we can take a $(k-1)$-spread $\cS$ of $H$ (i.e., a set of $(k-1)$-spaces that partition $H$) in such a way that if $\cV$ exist it will be contained in one of the spread elements. Moreover, this transitivity property allows us to choose such a Desarguesian (also called regular) spread, too.

Remember that we have already assumed on the contrary that $\cB'$, the minimal part of $\cB$, contains red and green essential points of weight one. By using Lemma \ref{esspt} and the choice of $H$ one can see that $\cB'$ must have both red and green affine points. In the following we will show that the minimal part of $\cB$ must be monochromatic which will give us a contradiction.

Let us define a point-line incidence structure $\Pi=\Pi(H,\cS)$ in the following way:
\begin{itemize}
\item the points of $\Pi$ are the points of $\PG(2k,q)\setminus H$ and the elements of $\cS$;
\item for each $k$-dimensional subspace $U$ of $\PG(2k,q)$ such that $U\cap H\in\cS$, the set $(U\setminus H)\cup \{U\cap H\}$ is considered to be a line of $\Pi$, as well as $\cS$;
\item a point is incident with a line if it is an element of it.
\end{itemize}
Then $\Pi$ is well-known to be a projective plane of order $\wt{q}:=q^k$ by the Andr\'e-Bruck-Bose representation, and since $\cS$ is a Desarguesian spread, then $\Pi\simeq\PG(2,\wt{q})$.

We will consider $\cS$ as the line at infinity in $\Pi$, and a point of $\Pi$ is called ideal or affine according to whether it is on the ideal line or not.

\begin{definition}\label{wtcoloring}
From the coloring $\cC$ of $\PG(2k,q)$, we define a coloring $\wt{\cC}$ of the points of $\Pi$ in the following way.
\begin{itemize}
\item For an affine point $P$ of $\Pi$, let $P$ inherit its color naturally from the coloring $\cC$.
\item For an ideal point $S\in\cS$, we distinguish two cases. On the one hand, if each point of $S$ forms a singleton color class of $\cC$ (i.e., $\cB \cap S = \emptyset$), then let the color of $S$ be the color of an arbitrarily chosen point of $S$. On the other hand, if there is a color class of $\cC$ of size at least two containing a point of $S$ (i.e., $\cB \cap S \neq \emptyset$), then color $S$ with a color $i$ such that $C_i\cap S\neq \emptyset$, $|C_i|\geq 2$, and for all $j\in\{1,2,\ldots, N\}$ we have $|C_i\cap S |\geq | C_j\cap S |$.
\end{itemize}
\end{definition}

Note that $\wt{\cC}$ is an $N$-coloring of $\Pi$ that might not be strict.

\begin{definition}\label{wtset}
From a weighted point set $X$ of $\PG(2k,q)$ with weight function $w_X$, we define a weight function $\wt{w}_{X}$ on the points of $\Pi$ in the following way.
\begin{itemize}
\item For an affine point $P$ of $\Pi$, let $\wt{w}_{X}(P)=w_X(P)$ if $P\in X$ and $\wt{w}_{X}(P)=0$ otherwise.
\item For an ideal point $S\in\cS$, let $\wt{w}_{X}(S)=|S\cap X|$ (counted with weights; that is, $\wt{w}_X(S)=\sum_{P\in S}w_X(P)$).
\end{itemize}
For the point set $X$, $\wt{X}$ denotes the weighted point set of $\Pi$ corresponding to the weight function $\wt{w}_X$ (zero weight points are not considered as elements of $\wt{X}$).
\end{definition}

Consider now $\wt{\cB}=\wt{\cB(\cC)}$ (recall that $\cB$ may be weighted). If $S\notin\wt{\cB}$, then the points of $S$ have pairwise distinct colors in $\cC$ (and all are singletons). If $S\in\wt{\cB}$ is of weight one, then the color of $S$ at $\wt{\cC}$ is the same as the color of the unique point in $S\cap\cB$ at $\cC$. We remark that for the union $\cB(\wt{\cC})$ of color classes of size at least two of $\wt{\cC}$ in $\Pi$, $\cB(\wt{\cC})\subseteq \wt{\cB}$, but equality does not follow immediately from the definitions. In the sequel, we will work with $\wt{\cB}$ using the property that every line of $\Pi$ intersects it in at least two equicolored points, yet we make the following, slightly stronger observation.

\begin{proposition}\label{wtproper}
The coloring $\wt{\cC}$ with the weight function $\wt{w}_{\cB}$ is a proper weighted coloring of $\Pi$; that is, every line of $\Pi$ contains some points of the same color whose weights add up to at least two.
\end{proposition}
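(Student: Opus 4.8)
The plan is to check the defining property of $\wt{\cC}$ line by line, splitting the lines of $\Pi$ furnished by the Andr\'e--Bruck--Bose representation into the affine lines---each coming from a $k$-space $U$ of $\PG(2k,q)$ with $U\cap H=S_U\in\cS$---and the single ideal line $\cS$. I would first isolate one observation about ideal points that carry exactly one unit of weight: if $\wt{w}_{\cB}(S)=|S\cap\cB|=1$, say $S\cap\cB=\{Q\}$, then the only color class of size at least two meeting $S$ is the one containing $Q$, so Definition~\ref{wtcoloring} forces $\wt{\cC}(S)=\varphi_\cC(Q)$. I will combine this with the trivial remark that an ideal point $S$ with $\wt{w}_{\cB}(S)\geq2$ already realizes two units of a single color (its own) on every line through it.

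For an affine line arising from $U$, rainbow-freeness of $\cC$ yields a color $c$ such that the color-$c$ points of $\cB$ inside $U$ have total weight at least two. Recall that, by the choice of $H$, all weight-two points of $\cB$ lie on $H$, so every affine point of $\cB$ is simple. I would split this color-$c$ weight between the affine part $U\setminus H$ and the ideal point $S_U$: if at least two units are affine, they give two distinct color-$c$ affine points of $\Pi$; if $|S_U\cap\cB|\geq2$, then $S_U$ alone suffices; and otherwise there is exactly one color-$c$ affine point $P$ and exactly one point of $\cB$ in $S_U$, which is then of color $c$, so the weight-one observation gives $\wt{\cC}(S_U)=c$ and $\{P,S_U\}$ is a monochromatic pair of total weight two.

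For the ideal line $\cS$, the point is that $H\cong\PG(2k-1,q)$ contains $k$-spaces (as $k\geq1$), and each such $k$-space is again a hyperedge of $\cH(2k,k,q)$; hence rainbow-freeness of $\cC$ produces two units of some color $c$ of $\cB$ lying in $H$, i.e.\ distributed among the spread elements. If they fall in one spread element $S$, then $\wt{w}_{\cB}(S)\geq2$ and $S$ doubly blocks $\cS$; if they occupy two distinct weight-one spread elements, the weight-one observation forces both to receive color $c$, giving two equicolored ideal points. This is exactly where the blocking property of $\cB$ enters: the color classes cannot separate $\cB\cap H$ into a rainbow, precisely because the $k$-subspaces of $H$ are themselves hyperedges.

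The main obstacle I anticipate is the bookkeeping at ideal points, where weight and color decouple: an ideal point carries the full weight $|S\cap\cB|$ of $\cB$ inside the spread element but only the single color assigned by the max-intersection rule of Definition~\ref{wtcoloring}. The weight-one observation is what tames this decoupling, and the only genuinely delicate check is that in each ``one affine, one ideal'' configuration the ideal point's assigned color is really $c$ and not a competitor; this holds because any competing color class of size at least two meeting $S_U$ would force a second $\cB$-point into $S_U$, returning us to the easy ``$\wt{w}_{\cB}(S_U)\geq2$'' case.
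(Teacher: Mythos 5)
Your proof is correct and, for the affine lines of $\Pi$, follows essentially the same case analysis as the paper: settle the case $\wt{w}_{\cB}(S_U)\geq 2$ outright, otherwise invoke properness of $\cC$ on the $k$-space $U$ and use the observation that a weight-one ideal point inherits the color of the unique point of $\cB$ it contains. The only genuine difference is that you also verify the claim for the ideal line $\cS$ (via a $k$-space contained in $H$, using that all weight-two points sit in a single spread element), a case the paper's proof silently omits since it starts from ``the $k$-space corresponding to $\ell$''; your argument is therefore slightly more complete.
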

\begin{proof}
Let $U$ be the $k$-space of $\PG(2k,q)$ corresponding to a line $\ell$ of $\Pi$, and let $S=U\cap H$. If $\wt{w}_{\cB}(S)\geq2$, we are done. If $\wt{w}_{\cB}(S)\leq1$ (whence $\{P,Q\}\not\subset S$ follows), then, as $\cC$ is proper, $U$ contains two distinct points of the same color with respect to $\cC$, say, $P$ and $Q$; note that $\{P,Q\}\subset\cB$. If both $P$ and $Q$ are affine points (which is the case if $\wt{w}_\cB(S)=0$), then we are also done. Suppose now $P\in S$, $Q\notin S$ and $\wt{w}(S)=1$. Then, as $P$ is the unique point of $S\cap\cB$, the color of $S$ at $\wt{\cC}$ is the same as the color of $P$ at $\cC$, and so $S$ and $Q$ are two points of $\ell$ having the same color at $\wt{\cC}$.
\end{proof}

It is clear from Definition \ref{wtset} that $\wt{\cB}$ and $\wt{\cB'}$ (that is, the weighted point set in $\Pi$ obtained from $\cB'$) are weighted double blocking sets in $\Pi$ of size (total weight) $|\wt{\cB}|=|\cB|$ and $|\wt{\cB'}|=|\cB'|$; however, $\wt{\cB'}$ may not be minimal. Let $\wh{\cB}$ be the unique minimal weighted double blocking set contained in $\wt{\cB}$ (cf.\ Result \ref{Harrach}); then $\wh{\cB}\subset\wt{\cB'}$ follows.

\begin{proposition}\label{traceback}
If $\wh{\cB}$ is monochromatic at $\wt{\cC}$, then $\cC$ is trivial.
\end{proposition}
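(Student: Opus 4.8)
The plan is to prove the following stronger statement: if $\wh{\cB}$ is monochromatic, say red, then \emph{every} essential point of $\cB$ is red; then $\cB'$ itself is a monochromatic $2$-fold $k$-blocking set, so $\cC$ is trivial. The mechanism that transports colour information is the following observation. Since $\wt{\cB}$ is a double blocking set of $\Pi$ and $\wh{\cB}$ is the unique minimal one contained in it (Result \ref{Harrach}), any line of $\Pi$ that is a $2$-secant to $\wt{\cB}$ must meet $\wt{\cB}$ in two points of $\wh{\cB}$; if $\wh{\cB}$ is red, these two points are red.

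The core of the argument is to attach to each essential point $P$ of $\cB$ a line of $\Pi$ that is a $2$-secant to $\wt{\cB}$ and passes through the image of $P$. I would first argue from the choice of $H$ (it meets $\cB$ in at most $2.2\theta_{k-1}$ points and contains all weight-two points) that the essential points are affine, so that $P$ is simple and $\wt{P}=P$. Then I apply Lemma \ref{esspt} to $P$: it produces a set $S$ of at least $3q^k+\theta_{k-1}-|\cB|$ simple points, all of the same colour as $P$, each $Q\in S$ coming with a $k$-space $U$ satisfying $U\cap\cB=\{P,Q\}$. The quantity $3q^k+\theta_{k-1}-|\cB|$ is positive because $|\cB|\le 2.2\theta_k-4q^{k-1}$, so such partners are abundant.

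Choosing a partner $Q\in S$ that is affine and whose tangent $k$-space $U$ meets $H$ in a spread element of $\cS$, the space $U$ corresponds to a line $\ell$ of $\Pi$. This $\ell$ is a $2$-secant to $\wt{\cB}$: its affine points carry exactly the two $\cB$-points $P,Q$ (each of weight one), while its ideal point $U\cap H$ contributes nothing, since $S'=U\cap H\subseteq U$ forces $S'\cap\cB\subseteq U\cap\cB=\{P,Q\}$ and $\{P,Q\}$ is affine, so $\wt{w}_{\cB}(S')=0$. Hence $\ell\cap\wt{\cB}=\{P,Q\}$, and the double blocking property forces both $P$ and $Q$ into $\wh{\cB}$; as $\wh{\cB}$ is red, $P$ is red. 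Carrying this out for every essential point shows $\cB'$ is monochromatic red, and we conclude as above.

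The main obstacle is exactly the selection in the previous paragraph: guaranteeing, for each essential $P$, a tangent $k$-space that is simultaneously a \emph{line of $\Pi$} (i.e.\ meets $H$ in a spread element) and whose second $\cB$-point is affine. The plan is to exploit the abundance of partners from Lemma \ref{esspt} together with the thinness of $\cB\cap H$: since most spread elements are disjoint from $\cB$, for most admissible directions the $k$-space $\langle P,S'\rangle$ spanned by $P$ and a spread element $S'$ contains no further $\cB$-point, so a counting argument should single out a clean tangent of the required type. Essential points lying on $H$, and the weight-two points gathered there, would be treated separately through the plurality rule governing the colours of ideal points; I expect this bookkeeping, rather than any new idea, to be the most delicate part.
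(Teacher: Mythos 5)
Your proposal has two genuine gaps, both located exactly where you flag ``the most delicate part,'' and neither is mere bookkeeping.

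First, the selection step. Lemma \ref{esspt} hands you tangent $k$-spaces $U$ with $U\cap\cB=\{P,Q\}$, but it gives you no control whatsoever over $U\cap H$. A $k$-space through an affine point $P$ is a line of $\Pi$ only if it meets $H$ in one of the $q^k+1$ spread elements, which is a tiny fraction of all $(k-1)$-subspaces of $H$; the spaces $U_Q$ produced by Lemma \ref{esspt} have no reason to be of this form, and the ``counting argument'' that would single out a tangent which is simultaneously a line of $\Pi$ is precisely the missing content. The paper does prove the statement you are after for affine points (Lemma \ref{affinesspt}: every $P\in\cB'\setminus H$ lies in $\wh{\cB}$), but by a completely different mechanism: if $P$ were non-essential for $\wt{\cB'}$, every line of $\Pi$ through $P$ would meet $\wt{\cB'}$ in at least three, hence by the $t\pmod p$ result (Result \ref{FSSzW}) in at least $p+2$ points, and summing over the $q^k+1$ lines through $P$ forces $|\cB'|\geq(p+1)(q^k+1)+1$, a contradiction. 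Second, the points of $\cB'$ lying on $H$ cannot be dispatched ``through the plurality rule'': the colour of an ideal point $S$ is only the plurality colour of $\cB\cap S$, so $S\in\wh{\cB}$ being red tells you that $S$ contains \emph{some} red point of $\cB$, not that every essential point inside $S$ is red. Since a monochromatic $2$-transversal must contain all of $\cB'$ (by uniqueness of the minimal double blocking set, Result \ref{Harrach}), you cannot ignore these points, and your colour-transport mechanism does not reach them.

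The paper's actual proof avoids pointwise colour transport entirely and is a short global count: if $\wh{\cB}$ is all green, then the affine part of $\wh{\cB}$ plus one green point per ideal point of $\wh{\cB}$ yields at least $2q^k+2-1.1\theta_{k-1}$ green points of $\cB'$ (using $|\cB'\cap H|\leq 2.2\theta_{k-1}$ and that ideal points have weight at most two); if $\cC$ were nontrivial, a single red essential simple point would, by Lemma \ref{esspt}, force more than $3q^k-|\cB'|$ further red points, and green plus red then exceeds $|\cB'|\leq 2.2\theta_k-4q^{k-1}$. You should either adopt this counting argument or supply genuinely new arguments for the two gaps above.
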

\begin{proof}
Clearly, $|\wh{\cB}|\geq 2(\wt{q}+1)=2q^k+2$. Suppose that each point of $\wh{\cB}$ is, say, green at $\wt{\cC}$. As $\wh{\cB}$ is minimal, each ideal point $S\in\wh{\cB}$ has weight at most two (the affine points of $\wh{\cB}$ have weight exactly one). An ideal point $S\in \wh{\cB}$ as a $(k-1)$-dimensional subspace in $\PG(2k,q)$ must contain at least one green point (with respect to $\cC$). By the choice of $H$ we know that $|\cB' \cap H| \leq 2.2\theta_{k-1}$. Therefore $\cB'$ contains at least $|\wh{\cB}|- \frac{2.2\theta_{k-1}}{2} \ge 2q^k + 2 - 1.1\theta_{k-1}$ green points.
We recall our general assumptions $|\cB'|\leq|\cB|\leq 2.2\theta_k-4q^{k-1}$ and suppose to the contrary that $\cC$ is not trivial. Then $\cB'$ contains a point that is not green but, say, red. As $\cB'$ is minimal, this red point is essential and simple thus Lemma \ref{esspt} claims that the number of red simple points is more than $3q^k-|\cB'|$, whenceforth $|\cB'|>2q^k + 2 - 1.1\theta_{k-1} + 3q^k-|\cB'|$, that is, $|\cB'|\geq 2.5q^k -0.55\theta_{k-1}$ follows, a contradiction. Hence $\cB'$ is all green, thus $\cC$ is trivial.
\end{proof}

By Proposition \ref{traceback}, it is enough to show that $\wh{\cB}$ is monochromatic. We will do this along the same main ideas as in \cite[Proposition 3.14]{BHSz}; however, the ideas must have been adapted to the presence of weights. We need the following lemma.

\begin{lemma}\label{affinesspt}
Let $P\in\cB'\setminus H$. Then $P$ is essential for $\wt{\cB'}$ in $\Pi$; consequently, $P\in\wh{\cB}$.
\end{lemma}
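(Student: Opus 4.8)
The goal is to show that an affine essential point $P\in\cB'\setminus H$ remains essential for the planar blocking set $\wt{\cB'}$ in $\Pi\simeq\PG(2,\wt q)$, from which $P\in\wh{\cB}$ follows because $\wh{\cB}\subset\wt{\cB'}$ is the unique minimal double blocking set and hence contains every essential point of $\wt{\cB'}$. The plan is to transfer the witness of essentiality of $P$ in $\PG(2k,q)$ to a witness in the plane. Since $P$ is essential for $\cB'$ as a $2$-fold $k$-blocking set, there is a $k$-space $U$ through $P$ meeting $\cB'$ in exactly two points (counted with weights). I would first apply Lemma~\ref{esspt} (with $\cB'$ in place of $\cB$) to $P$: since $P$ is simple and essential, there is a simple point $Q$ and a $k$-space $U$ with $U\cap\cB'=\{P,Q\}$. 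The choice of $H$ guarantees that the points of weight two all lie in $H$, so any such $Q$ witnessing essentiality through $P$ can be taken to be simple, consistent with $P$ being affine.

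The key step is to pick the witnessing $k$-space $U$ so that it corresponds to a line of $\Pi$, i.e.\ so that $U\cap H$ is a single spread element $S\in\cS$ rather than a general $(k-1)$-subspace of $H$. I would argue that among the $k$-spaces through $P$ meeting $\cB'$ in just $\{P,Q\}$, or more robustly among all lines of $\Pi$ through the affine point $P$, one can be chosen whose trace at infinity $S$ contributes nothing forced to $\wt{\cB'}$: concretely, the line $\ell$ of $\Pi$ through $P$ corresponding to $U$ satisfies $\ell\cap\wt{\cB'} = \{P\}\cup(\{Q\}\text{ or }\{S\})$. If $Q\notin H$ then $Q$ is affine and $\ell\cap\wt{\cB'}=\{P,Q\}$ directly; if $Q\in S=U\cap H$ then $\wt w_{\cB'}(S)=|S\cap\cB'|$, and I must ensure this equals $1$ so that $\ell$ meets $\wt{\cB'}$ in exactly the two points $P$ and $S$. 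In either case, removing $P$ from $\wt{\cB'}$ leaves the line $\ell$ blocked only once, so $P$ is essential for $\wt{\cB'}$.

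The main obstacle is controlling the weight $\wt w_{\cB'}(S)=|S\cap\cB'|$ of the ideal point where the witnessing space meets $H$: a spread element $S$ is a whole $(k-1)$-space of $H$ and could a priori contain many points of $\cB'$, inflating $\ell\cap\wt{\cB'}$ above two even though $U\cap\cB'=\{P,Q\}$ downstairs. The resolution is that $U\cap H = S$ and $U\cap\cB'=\{P,Q\}$ with $P\notin H$ force $S\cap\cB'\subseteq\{Q\}$, so $\wt w_{\cB'}(S)\le 1$ automatically; the cone/weight-two points are confined to $H$ but this does not obstruct essentiality since we only need the single witnessing line. Thus removing $P$ unblocks $\ell$ in $\Pi$, establishing that $P$ is essential for $\wt{\cB'}$ and therefore lies in the minimal blocking set $\wh{\cB}$.
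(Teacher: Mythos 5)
Your reduction to finding a single witnessing line of $\Pi$ through $P$ is the right target, and your closing observation (an essential point of $\wt{\cB'}$ must lie in the unique minimal $\wh{\cB}\subset\wt{\cB'}$) is fine. But the central step is not carried out: you never actually produce a witnessing line. Essentiality of $P$ for $\cB'$ in $\PG(2k,q)$ gives a $k$-space $U$ through $P$ with $|U\cap\cB'|=2$, but only the $q^k+1$ special $k$-spaces $\langle P,S\rangle$ with $S\in\cS$ correspond to lines of $\Pi$, and nothing forces the witnessing $U$ (or any of the witnessing spaces supplied by Lemma~\ref{esspt}) to be of this form: $U\cap H$ is in general a $(k-1)$-space that is not a spread element. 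Your ``key step'' of choosing $U$ so that $U\cap H\in\cS$ is exactly the assertion that needs proof, and the ``resolution'' you offer only addresses the easier issue of the ideal point's weight, which is indeed automatic once $U\cap H\in\cS$ and $U\cap\cB'=\{P,Q\}$. Replacing $U$ by the special space $\langle P,S\rangle$ containing the line $PQ$ does not help either, since that larger perturbation may pick up many further points of $\cB'$.

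The paper avoids this obstacle entirely by arguing by contradiction with a counting argument: if $P$ were not essential for $\wt{\cB'}$, then \emph{every} line of $\Pi$ through $P$, i.e.\ every $k$-space $\langle P,S\rangle$, $S\in\cS$, would meet $\cB'$ in at least three points; the $2\pmod p$ property (Result~\ref{FSSzW}) upgrades ``at least three'' to ``at least $p+2$'', and summing over the $q^k+1$ such $k$-spaces, which pairwise meet only in $P$, gives $|\cB'|\geq (p+1)(q^k+1)+1>2.5q^k\geq|\cB|$, a contradiction. Some such global input (the mod-$p$ divisibility plus the upper bound on $|\cB|$) appears unavoidable here, and it is missing from your argument; as written, the proposal has a genuine gap.
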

\begin{proof}
Suppose to the contrary. Then every line of $\Pi$ through $P$ intersects $\wt{\cB'}$ in at least three points (with respect to $\wt{w}_{\cB'}$). This yields that for every $S\in\cS$, the $k$-space $\langle P,S\rangle$ of $\PG(2k,q)$ intersects $\cB'$ in at least three and thus, by Result \ref{FSSzW}, in at least $p+2$ points. As the $q^k+1$ distinct $k$-spaces of form $\langle P,S\rangle$, $S\in\cS$, pairwise intersect in $P$ only, we get $2.5q^k\geq |\cB|\geq|\cB'|\geq (p+1)(q^k+1)+1$, a contradiction.
\end{proof}

By our indirect assumption we know that $\cB'$ contains both red and green simple affine points. By Lemma \ref{affinesspt} and the definition of $\wt{\cC}$ we see that $\wh{\cB}$ also contains both red and green affine (and hence single) points.

If there are other color classes in $\cC$ containing more than two points, replace their color by red. In this way we obtain a nontrivial proper coloring $\cC'$ such that $\wt{\cB}(\cC)=\wt{\cB}(\cC')$; thus it is enough to restrict our attention for colorings admitting only two color classes of size more than one. Then the points of $\wh{\cB}$ are also either red or green and both colors actually occur in the affine part. Denote the set of red points of $\wh{\cB}$ by $\wh{\cB}_r$ and the set of green ones by $\wh{\cB}_g$.

Recall that $|\cB|=|\wt{\cB}| \le 2(\wt{q}+1)+\gamma \wt{q}$ for some $\gamma\leq\frac{1}{10}$, and let us write $X=\gamma \wt{q}$. By Result \ref{FSSzW}, every line meets $\wh{\cB}$ in $2 \pmod {p^e}$ points, where $e\geq 1$ is the largest integer for which this property holds. Write $|\wh{\cB}| = 2(\wt{q}+1)+c$. Note that since $\wh{\cB}$ is a minimal double blocking set in $\Pi$, every point of it has weight at most two; moreover, by its definition, all double points of $\wh{\cB}$ are on the ideal line. It is easy to see that if $P \in \wh{\cB}$ is a single point, then there are at least $\wt{q}+1-\frac{\wt{q}+c}{p^e}$ bisecants through it and if $P$ is a double point, then there are at least $\wt{q}+1-\frac{2\wt{q}+c}{p^e}$ bisecants through it. 
If $P\in\wh{\cB}$ is an affine single point, then at least $\wt{q}+1 - \frac{\wt{q}+c}{p^e} - (X-c)$ of the bisecants through $P$ to $\wh{\cB}$ are bisecants to $\wt{\cB}$ as well.
Since $\wt{\cC}$ is proper, the points on these bisecants must have the same color as $P$. As there are both red and green affine (and hence single) points of $\wh{\cB}$, we find
\begin{eqnarray}
|\wh{\cB}_r| &\ge&  (\wt{q}+2) - \frac{\wt{q}+c}{p^e} - (X-c), \label{redbiseclower}\\
|\wh{\cB}_g| &\ge&  (\wt{q}+2) - \frac{\wt{q}+c}{p^e} - (X-c) , \label{greenbiseclower}
\end{eqnarray}
which also immediately gives
\begin{equation}
|\wh{\cB}_r| = |\wh{\cB}| - |\wh{\cB}_g| \le 2(\wt{q}+1)+c - \left ( \wt{q}+2 - \frac{\wt{q}+c}{p^e} - (X-c) \right ) = \wt{q} + \frac{\wt{q}+c}{p^e} + X. \label{redbisecupper}
\end{equation}

Our aim now is to show that one of the color classes, say, the red class, contains even more points than what was shown above, leading to a lower bound on $|\wh{\cB}|$ large enough to get a contradiction. To this end we want to find an affine single red point in $\wh{\cB}$ that has many non-bisecant lines through it on which there are more red points than green.

For a line $\ell$ of $\Pi$, let $n_\ell = |\ell\cap \wh{\cB}|$, $n_\ell^r = |\ell \cap \wh{\cB}_r|$, $n_\ell^g = |\ell \cap \wh{\cB}_g|$.
Clearly, $n_\ell^r + n_\ell^g = n_\ell$ holds for all line $\ell$.
We denote the affine part of $\wh{\cB}$ by $\wh{\cB}^a$ and for a line $\ell$ different from $\cS$, define $\bar{n}_\ell$, $\bar{n}_\ell^r$, $\bar{n}_\ell^g$ similarly as above but with respect to $\wh{\cB}^a$. Again, $\bar{n}_\ell^r + \bar{n}_\ell^g = \bar{n}_\ell$ holds for every affine line $\ell$. Clearly, $n_\ell -2 \le \bar{n}_\ell \le n_\ell$ also holds. We recall Result \ref{FSSzW} and $p^e \ge 3$. Observe that if $\bar{n}_\ell = 0$, then $\ell$ must meet the ideal line in a double point of $\wh{\cB}$; if $\bar{n}_\ell = 1$, then $\ell$ must meet the ideal line in a single point of $\wh{\cB}$; and if $\bar{n}_\ell = 2$, then $\ell$ must meet the ideal line outside of $\wh{\cB}$. Also, $\bar{n}_\ell>2\Leftrightarrow n_\ell>2$. Let us denote the set of single and double points of $\cS$ by $\cS^1$ and $\cS^2$, respectively.
With these notations one can find the inequalities
    \[
    \sum_{\ell \in \cL \setminus \ell_{\infty},~\bar{n}_\ell = 1} \bar{n}_\ell \le |\cS^1|\wt{q} \qquad \mbox{and} \qquad \sum_{\ell \in \cL \setminus \ell_{\infty},~\bar{n}_\ell = 2} \bar{n}_\ell \le 2 \cdot (\wt{q}+1 - |\cS^1|- |\cS^2|)\wt{q}.
    \]
Clearly, we have $\sum_{\ell \in \cL \setminus \ell_{\infty}} \bar{n}_\ell = |\wh{\cB}^a| \cdot (\wt{q}+1)$. Let $\Delta=|\cS^1| + 2|\cS^2|=|\wh{\cB}| - |\wh{\cB}^a|$. Then
    \[
    \sum_{\ell \in \cL \setminus \ell_{\infty},~n_\ell > 2} \bar{n}_\ell = \sum_{\ell \in \cL \setminus \ell_{\infty},~\bar{n}_\ell > 2} \bar{n}_\ell = \sum_{\ell \in \cL \setminus \ell_{\infty}} \bar{n}_\ell - \left ( \sum_{\ell \in \cL \setminus \ell_{\infty},~\bar{n}_\ell = 1} \bar{n}_\ell + \sum_{\ell \in \cL \setminus \ell_{\infty},~\bar{n}_\ell = 2} \bar{n}_\ell \right ) \ge
    \]
    \[
    \ge \sum_{\ell \in \cL \setminus \ell_{\infty}} \bar{n}_\ell - \wt{q}(2\wt{q} + 2 - \Delta) = (2(\wt{q}+1) + c - \Delta)(\wt{q}+1) - \wt{q}(2\wt{q} + 2 - \Delta) =
    \]
    \[
    = (c+2)\wt{q} + (c+2 - \Delta).
    \]
    We will refer to a line $\ell$ as a long secant if $n_\ell > 2$ holds. Let $\cL^r$ be the set of affine long secants with $\bar{n}_\ell^r > \bar{n}_\ell^g$; define $\cL^g$ and $\cL^{=}$ analogously. Without loss of generality we may assume that $\sum_{\ell \in \cL^r} \bar{n}_\ell^r \ge \sum_{\ell \in \cL^g} \bar{n}_\ell^g$, therefore
    \[
    (c+2)\wt{q} + (c+2 - \Delta) \le \sum_{\ell \in \cL \setminus \ell_{\infty},~n_\ell > 2} \bar{n}_\ell = \sum_{\ell \in \cL^r} (\bar{n}_\ell^r + \bar{n}_\ell^g) + \sum_{\ell \in \cL^g} (\bar{n}_\ell^r + \bar{n}_\ell^g) + \sum_{\ell \in \cL^{=}} (\bar{n}_\ell^r + \bar{n}_\ell^g) \le
    \]
    \[
    \le \sum_{\ell \in \cL^r} 2\bar{n}_\ell^r + \sum_{\ell \in \cL^g} 2\bar{n}_\ell^g + \sum_{\ell \in \cL^=} 2\bar{n}_\ell^r \le \sum_{\ell \in \cL^r \cup \cL^=} 4\bar{n}_\ell^r.
    \]
    We call an affine long secant $\ell$ with $\bar{n}_\ell^r \ge \bar{n}_\ell^g$ an almost red line. From the last inequality we get that there is a red affine point $P\in \wh{\cB}_r$ such that the number of almost red lines through $P$ is at least
\[\frac{(c+2)\wt{q} + (c+2 - \Delta)}{4|\wh{\cB}_r^a|} \ge \frac{(c+2)\wt{q} + (c+2 - \Delta)}{4|\wh{\cB}_r|},\] where $\wh{\cB}_r^a$ is the affine part of $\wh{\cB}_r$.

    By the $t \pmod p$ result (Result \ref{FSSzW}) we know that for a long secant $\ell$, $n_\ell \ge p^e + 2$ and since $n_\ell - 2 \le \bar{n}_\ell$, we can deduce that $\bar{n}_\ell^r + \bar{n}_\ell^g \ge p^e$ for all long secants. Hence, since $P$ is an affine single red point, any almost red long secant through $P$ contains at least $\frac{p^e}{2} - 1$ red points of $\wh{\cB}$ different from $P$ on it (counted with weights). Taking into account the number of red points located on the bisecants through $P$ \eqref{redbiseclower} and also the upper bound \eqref{redbisecupper} on $\wh{\cB}_r$, this yields
    \[
    (\wt{q}+2) - \frac{\wt{q}+c}{p^e} - (X-c) + \frac{(c+2)\wt{q} + (c+2 - \Delta)}{4 |\wh{\cB}_r|} \cdot \left ( \frac{p^e}{2}-1 \right ) \leq |\wh{\cB}_r| \leq \wt{q} + \frac{\wt{q}+c}{p^e} + X.
    \]
Rearranging the above inequality gives
    \[
    2X + \frac{2(\wt{q}+c)}{p^e} - c-2 \ge \frac{(c+2)\wt{q} + (c+2 - \Delta)}{4 |\wh{\cB}_r|} \cdot \left ( \frac{p^e}{2}-1 \right ).
    \]
From Result \ref{FSSzW} we know that $c+2\ge \frac{\wt{q}}{p^e+1} - 1$. Applying this and also $|\wh{\cB}_r|\leq \wt{q} + \frac{\wt{q}+c}{p^e} + X$ we get
    \[
    2X + \frac{2(\wt{q}+c)}{p^e} - \frac{\wt{q}}{p^e + 1} + 1 \ge \frac{\left ( \frac{\wt{q}}{p^e+1} - 1 \right )\wt{q} + \left ( \frac{\wt{q}}{p^e+1} - 1 -\Delta \right )}{4 \cdot \left ( \wt{q} + \frac{\wt{q}+c}{p^e} + X \right )} \cdot \left ( \frac{p^e}{2}-1 \right )
    \]

Let us write $X=\gamma\wt{q}$ again. Multiplying both sides with the whole denominator of the right (note that this is surely a positive number) and arranging everything to the left side we get the following due to a lengthy computation:

\begin{equation}\label{4soros}
\begin{gathered}
\wt{q}^2 p^e \left (-1 + 16\gamma + 16\gamma^2 \right ) + 2 \wt{q}p^{2e} + \wt{q}^2 \left ( 10 + 40\gamma + 16\gamma^2 \right ) + \frac{\wt{q}^2}{p^e} \left ( 24 + 32\gamma \right ) +\\
\wt{q}c \left ( 16 + 32\gamma \right ) + 16\frac{\wt{q}^2}{p^e} + \wt{q}p^e \left ( 6 + 8\gamma \right ) + \frac{\wt{q}c}{p^e} \left ( 40 + 32\gamma \right ) +  \wt{q}\left(16 + 8\gamma\right) + \\
32 \frac{\wt{q}c}{p^{2e}} + 16 \frac{c^2}{p^{2e}} + 16\frac{c^2}{p^e} + 8c + 8 \frac{\wt{q}}{p^e} + 8 \frac{c}{p^e} - 2 - 2\Delta + 2p^{2e} + 2\Delta p^{2e} \ge 0
\end{gathered}
\end{equation}

If $p^e = \wt{q}$, then every line which is not a $2$-secant to $\wh{\cB}$ is contained completely in $\wh{\cB}$ (and the ideal point of it has weight two) since the affine points are single ones and an ideal point has weight at most two. Hence if there exists a double point in the ideal line then $\wh{\cB}$ has to be the union of two complete lines and otherwise every line is a $2$-secant to $\wh{\cB}$. In the first case we get a contradiction with Proposition \ref{small} and in the latter case we get that the number of lines has to be equal to $\binom{|\wh{\cB}|}{2}$, but now $|\wh{\cB}| = 2(\wt{q}+1)$. Hence $p^e = \wt{q}$ is not possible.

If $p^e < \wt{q}$, then the leading term in expression (\ref{4soros}) is $\wt{q}^2 p^e \left (-1 + 16\gamma + 16\gamma^2 \right )$. If $\gamma$ is chosen so that $-1 + 16\gamma + 16\gamma^2 <0$ and $q$ and $p^e$ are large enough, then the leading term overflows the remaining ones, hence we will get a contradiction and conclude that our coloring $\cC$ must be trivial. The coefficient is negative if $\frac{1}{\gamma}>8+4\sqrt{5}\approx 16,944$ but then the remaining terms can be quite large. Thus at this point we make a rather arbitrary choice of the parameters in our likes and, in case someone would need a differently set result, we will make a remark on the other possible choices.

Let us consider the non-negative expression on the left side of (\ref{4soros}) as a function $f=f(\wt{q},p^e,\gamma,c,\Delta)$. Clearly, $f$ is increasing in $c$, $\Delta$ and $\gamma$. By the definitions of $\Delta$ and $c$ we immediately see that $\Delta\leq 2(\wt{q}+1)$ and $c\leq X=\gamma\wt{q}$, thus $g(\wt{q},p^e,\gamma):=f(\wt{q},p^e,\gamma,\gamma\wt{q},2(\wt{q}+1))\geq 0$ follows. Let us now fix the value of $\gamma=\frac{1}{100}$. Now $p^{2e}\cdot g\left (\wt{q},p^e,\frac{1}{100}\right )=$
\[
(6\wt{q} + 6)p^{4e} + \left ( - \frac{524}{625}\wt{q}^2 + \frac{152}{25}\wt{q} \right ) p^{3e} + \left (\frac{6603}{625}\wt{q}^2 + \frac{304}{25}\wt{q} - 6 \right )p^{2e} +
\left ( \frac{25453}{625}\wt{q}^2 + \frac{202}{25}\wt{q}\right )p^e + \frac{201}{625}\wt{q}^2 \ge 0.
\]

Since $p^e \ne \wt{q}$, we know that $p^e \le \frac{\wt{q}}{p}$ holds, and on the other hand, from Result \ref{FSSzW} and $|\wt{B}|\leq 2(\wt{q}+1)+\gamma\wt{q}$ one can deduce that $p^e \ge \frac{\wt{q}}{\gamma q + 3} - 1$, which is equivalent to $p^e \ge 99 - \frac{30000}{\wt{q}+300}$. Since Theorem \ref{ucnstabq} requires $\wt{q}\ge239$ and $p\ge 11$, we may increase the terms with positive coefficients by changing $p^e$ to $\frac{\wt{q}}{11}$ or by multiplying with $\frac{\wt{q}}{239}$. Moreover, we can decrease the terms with negative coefficients by changing $p^e$ to $47$, since $\wt{q}\geq 239$ implies $p^e \ge 99 - \frac{30000}{239+300} \approx 43,341$, whence $p^e\geq 47$. With these three elementary observations one can give an upper bound $p^{2e}\cdot g(\wt{q},p^e,\frac{1}{100})\le$
\small
\[
\begin{aligned}
&(6\wt{q}+6)p^{3e}\cdot\frac{\wt{q}}{11} + \left (-\frac{524}{625}\wt{q}^2 + \frac{152}{25}\wt{q} \right )p^{3e} + \left ( \frac{6603}{625}\wt{q}^2 + \frac{304}{25}\wt{q} - 6 \right )p^{2e} + \left ( \frac{25453}{625}\wt{q}^2 + \frac{202}{25}\wt{q} \right )p^e + \frac{201}{625}\wt{q}^2\\
&=\left (-\frac{2014}{6875} \right )p^{3e}\wt{q}^2 + \frac{1822}{275}p^{3e}\wt{q} + \frac{6603}{625}p^{2e}\wt{q}^2 + \frac{304}{25}p^{2e}\wt{q} - 6p^{2e} + \frac{25453}{625}p^e\wt{q}^2 + \frac{202}{25}p^e\wt{q} + \frac{201}{625}\wt{q}^2\\
& \le \left (-\frac{2014}{6875} \right )p^{3e}\wt{q}^2 + \frac{1822}{275}p^{3e}\frac{\wt{q}^2}{239} +
\frac{6603}{625}p^{2e}\wt{q}^2 + \frac{304}{25}p^{2e}\wt{q} - 6p^{2e} + \frac{25453}{625}p^e\wt{q}^2 + \frac{202}{25}p^e\wt{q} + \frac{201}{625}\wt{q}^2\\
&= \left ( -\frac{435796}{1643125}\right )p^{3e}\wt{q}^2 + \frac{6603}{625}p^{2e}\wt{q}^2 + \frac{304}{25}p^{2e}\wt{q} - 6p^{2e} + \frac{25453}{625}p^e\wt{q}^2 + \frac{202}{25}p^e\wt{q} + \frac{201}{625}\wt{q}^2\\
&\le \left ( -\frac{435796}{1643125}\right )p^{2e}\wt{q}^2\cdot 47 + \frac{6603}{625}p^{2e}\wt{q}^2 + \frac{304}{25}p^{2e}\wt{q} - 6p^{2e} + \frac{25453}{625}p^e\wt{q}^2 + \frac{202}{25}p^e\wt{q} + \frac{201}{625}\wt{q}^2\\
&= \left (- \frac{4997}{2629}\right )p^{2e}\wt{q}^2 + \frac{304}{25}p^{2e}\wt{q} - 6p^{2e} + \frac{25453}{625}p^e\wt{q}^2 + \frac{202}{25}p^e\wt{q} + \frac{201}{625}\wt{q}^2\\
&\le \left (- \frac{4997}{2629}\right )p^e\wt{q}^2\cdot47 + \frac{304}{25}p^e\frac{\wt{q}^2}{11} - 6p^{2e} +
\frac{25453}{625}p^e\wt{q}^2 + \frac{202}{25}\frac{\wt{q}^2}{11} + \frac{201}{625}\wt{q}^2\\
&= \left (-\frac{78054538}{1643125}\right )p^e\wt{q}^2 - 6p^{2e} + \frac{7261}{6875}\wt{q}^2
\le \left (-\frac{78054538}{1643125}\right )\cdot 47\wt{q}^2 - 6p^{2e} + \frac{7261}{6875}\wt{q}^2\\
& = - \frac{3666827907}{1643125}\wt{q}^2 - 6p^{2e} < 0,
\end{aligned}
\]

\normalsize

\noindent which is a contradiction, hence the coloring must be trivial. Thus we finished the proof of Theorem \ref{ucnstabq}. \hfill $\Box$

\vspace{8mm}

\begin{remark}\label{ucnthm:remark}
One may want to choose a suitable $\gamma$ to obtain a different stability gap in Theorem \ref{ucnstabq}, in which case the required lower bounds on $q^k$ and $p$ must be adjusted appropriately. The main limitation in our proof is that $\frac{1}{\gamma}>8+4\sqrt{5}\approx 16.944$ must hold. In this way, one may get the conclusion of Theorem \ref{ucnstabq} under the conditions $\delta\leq \frac{1}{40}q^k-\theta_{k-1}-\frac32$ (that is $\gamma=\frac{1}{20}$) and $p\geq 151$ (here the automatic lower bound $q^k\geq p^2$ is enough), or $\delta\leq \frac{1}{100}q^k-\theta_{k-1}-\frac32$ (that is, $\gamma=\frac{1}{50}$), $p\geq 17$ and $q^k\geq479$, for example.
\end{remark}

\section*{Acknowledgement}

The authors gratefully acknowledge the support of the bilateral Slovenian--Hungarian Joint Research Project no.\ NN 114614 (in Hungary) and N1-0032 (in Slovenia). The second author was also supported by the J\'anos Bolyai Research Grant.

\begin{flushleft}
Zolt\'an L.\ Bl\'azsik, Tam\'{a}s H\'{e}ger\\
MTA--ELTE Geometric and Algebraic Combinatorics Research Group, and\\
ELTE E\"otv\"os Lor\'and University, Budapest\\
1117 Budapest, P\'azm\'any P.\ stny.\ 1/C, Hungary\\
Department of Computer Science\\
e-mail: {\sf blazsik@caesar.elte.hu}, {\sf heger@caesar.elte.hu}\\
\end{flushleft}

\begin{flushleft}
Tam\'as Sz\H{o}nyi\\
ELTE E\"otv\"os Lor\'and University, Budapest, Hungary\\
Department of Computer Science, and\\
MTA--ELTE Geometric and Algebraic Combinatorics Research Group\\
1117 Budapest, P\'azm\'any P.\ stny.\ 1/C, Hungary, and\\
UP FAMNIT, University of Primorska, Glagolja\v{s}ka 8, 6000, Koper, Slovenia\\
e-mail: {\sf szonyi@cs.elte.hu}
\end{flushleft}

\end{document}